\DeclareMathAlphabet\mathbfcal{OMS}{cmsy}{b}{n}
\newtheorem{theorem}{Theorem}
\theoremstyle{plain}
\newtheorem{corollary}{Corollary}
\newtheorem{definition}{Definition}
\newtheorem{example}{Example}
\newtheorem{lemma}{Lemma}
\newtheorem{proposition}{Proposition}
\newtheorem{remark}{Remark}
\newtheorem*{observation*}{Observation}
\newtheorem*{proposition*}{Proposition}
\newtheorem*{theorem*}{Theorem}
\newtheorem*{claim*}{Claim}
\newcommand{\Aut}{\mbox{Aut}}
\newcommand{\Isom}{\mbox{Isom}}
\newcommand{\rank}{\mbox{rank}}
\newcommand{\cl}{\mbox{cl}}
\title{Real Einstein loci}
\date{}
\author{Gabriella Clemente}
\begin{document}
\maketitle

\begin{abstract}
The aim of this article is to study the interplay between the complex, and underlying real geometries of a K{\"a}hler manifold. We provide a necessary and sufficient condition for certain anti-holomorphic automorphisms of a compact K{\"a}hler-Einstein manifold to determine real Einstein submanifolds.
\end{abstract}
\noindent

\noindent \textbf{Keywords.}
K{\"a}hler-Einstein metrics, Einstein submanifolds, real structures.\\ 

\noindent \textbf{Mathematics~Subject~Classification:}
53C25, 32Q20, 32Q26, 14P99.

\section{Introduction}\label{1}

The study of submanifolds of Einstein spaces with special curvature properties is fairly ancient. Consider, for instance, space forms, which are the simplest examples of Einstein manifolds. It is easy to see that totally geodesic, and more generally, totally umbilical submanifolds of a space form are also space forms. As soon as we relax the constant curvature assumption on the ambient space, we run into trouble, e.g.\ totally geodesic submanifolds of an Einstein manifold are not necessarily Einstein. Nevertheless, there are results providing sufficient conditions for submanifolds of Einstein manifolds to be Einstein. It is known that totally umbilical submanifolds of an Einstein manifold with constant mixed sectional curvature are Einstein \cite{AL}. Totally umbilical hypersurfaces in an Einstein manifold have constant scalar curvature, and any manifold of constant scalar curvature can be isometrically embedded as a totally geodesic hypersurface into a uniquely given Einstein manifold \cite{Koiso}.

In the complex setting, there are also results on the geometry of submanifolds subject to various constraints placed on the curvature of the ambient space. In \cite{Yano}, it is shown that totally umbilical totally real submanifolds of a K{\"a}hler manifold with vanishing Bochner curvature tensor are conformally flat. The existence of totally geodesic real hypersurfaces in a K{\"a}hler-Einstein manifold poses severe restrictions on the ambient geometry \cite{Koiso}.

The goal of this article is to study interactions between anti-holomorphic maps, such as real structures, and the metric geometry of K{\"a}hler manifolds. We characterize real Einstein submanifolds of compact K{\"a}hler-Einstein manifolds that are fixed point loci of certain anti-holomorphic maps. Our characterization involves a curvature trace operator. We prove that such a real locus is Einstein iff this trace operator is orthogonally diagonalizable with single real eigenvalue (c.f.\ Theorem \ref{maint}). 

The organization is as follows. In section \ref{2A}, we review Riemannian and complex differential geometry concepts that will be needed later on. Section \ref{2B} is about anti-holomorphic isometries of K{\"a}hler manifolds. In section \ref{2C}, we produce examples of totally geodesic real submanifolds of complex projective manifolds, including smooth projective toric varieties. Section \ref{3} is dedicated to the proof of Theorem \ref{maint}. We conclude with some remarks on the Lagrangian nature of real loci in compact K{\"a}hler-Einstein manifolds.

\section{Anti-holomorphisms and K{\"a}hler metrics}\label{2}

\subsection{Background}\label{2A}
 
Let $(M,g)$ be a Riemannian manifold and $(V, g|_V)$ be a connected submanifold. Recall that $(V, g|_V)$ is totally geodesic if any geodesic in $(V, g|_V)$ stays a geodesic in $(M,g).$ For example, geodesics are always $1$-dimensional totally geodesic submanifolds. 

The second fundamental form of $V$ is the difference $h(\zeta,\eta):=\nabla^M_{\zeta} \eta - \nabla^V_{\zeta} \eta,$ where $\nabla^M$ and $\nabla^V$ are the Levi-Civita connections of $g,$ respectively $g|_V.$ For the proofs of Proposition \ref{tgs} and Theorem \ref{totig}, see pages $95 - 96$ of \cite{Kir}.

\begin{proposition}\label{tgs}
The submanifold $(V, g|_V)$ of $(M,g)$ is totally geodesic iff $h=0.$ 
\end{proposition}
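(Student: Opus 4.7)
The plan is to prove each direction by unpacking the definitions via the geodesic equation, using the fact that $h$ is a symmetric tensor on $V$. First I would record two preliminary observations about $h$: (i) it is $C^\infty(V)$-bilinear, because both $\nabla^M$ and $\nabla^V$ satisfy the same Leibniz rules in their two slots (the $\zeta(f)\eta$ terms cancel, and the linearity in $\zeta$ is immediate), so at each point $p\in V$ the expression $h_p(u,v)$ depends only on $u,v\in T_pV$, not on extensions; and (ii) $h$ is symmetric, since $\nabla^M$ and $\nabla^V$ are torsion-free and hence both differences $\nabla^{\bullet}_\zeta\eta-\nabla^{\bullet}_\eta\zeta$ equal the same bracket $[\zeta,\eta]$, so $h(\zeta,\eta)-h(\eta,\zeta)=0$.

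For the forward direction, assume $V$ is totally geodesic, pick $p\in V$ and $v\in T_pV$, and let $\gamma$ be the geodesic in $(V,g|_V)$ with $\dot\gamma(0)=v$. By hypothesis $\gamma$ is also a geodesic of $(M,g)$, which means that both $\nabla^V_{\dot\gamma}\dot\gamma$ and $\nabla^M_{\dot\gamma}\dot\gamma$ vanish along $\gamma$. Evaluating $h$ along $\gamma$ then gives $h(\dot\gamma,\dot\gamma)=0$, and in particular $h_p(v,v)=0$. Since $v$ was arbitrary and $h_p$ is symmetric bilinear by the preliminaries, the polarization identity
\[
h_p(u,v)=\tfrac{1}{2}\bigl(h_p(u+v,u+v)-h_p(u,u)-h_p(v,v)\bigr)
\]
forces $h_p\equiv 0$ for every $p$, i.e.\ $h=0$.

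For the converse, assume $h\equiv 0$, so that $\nabla^M_\zeta\eta=\nabla^V_\zeta\eta$ whenever $\zeta,\eta$ are tangent to $V$. Let $\gamma$ be any geodesic of $(V,g|_V)$. Then $\dot\gamma$ is tangent to $V$, so applying the identity above to $\zeta=\eta=\dot\gamma$ gives $\nabla^M_{\dot\gamma}\dot\gamma=\nabla^V_{\dot\gamma}\dot\gamma=0$, which exactly says that $\gamma$ remains a geodesic of $(M,g)$. Hence $(V,g|_V)$ is totally geodesic.

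There is no real obstacle here; the argument is essentially a rephrasing of the geodesic equation, and the only points requiring a little care are the tensoriality and symmetry of $h$ in the opening step, plus the observation that every tangent vector at a point of $V$ is realized as the initial velocity of some $V$-geodesic (needed so that the forward direction reaches all of $T_pV$). The latter is just the local existence theorem for geodesics of $g|_V$.
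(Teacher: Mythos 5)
Your proof is correct. The paper does not prove this proposition itself but simply cites pages 95--96 of Klingenberg's \emph{Riemannian geometry}, and your argument --- tensoriality and symmetry of $h$, then polarization of $h(\dot\gamma,\dot\gamma)=0$ over initial velocities of $V$-geodesics for one direction, and the Gauss formula $\nabla^M_{\dot\gamma}\dot\gamma=\nabla^V_{\dot\gamma}\dot\gamma+h(\dot\gamma,\dot\gamma)$ for the other --- is precisely the standard proof given there.
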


Suppose that $(V, g|_V)$ is totally geodesic in $(M,g).$ Then, by Proposition \ref{tgs}, we have that for any vector fields $\zeta,\eta,\rho,\upsilon \in \mathfrak{X}(V),$ \[Rm^V(\zeta,\eta,\rho,\upsilon)=Rm^M(\zeta,\eta,\rho,\upsilon),\] where $Rm^V$ is the Riemann curvature tensor of $g|_V$ and $Rm^M$ is that of $g$ (c.f.\ Corollary 1.11.2 \cite{Kir}). 

\begin{theorem}\label{totig}
Each connected component of the fixed point set of an isometry of $(M,g)$ is a totally geodesic submanifold.
\end{theorem}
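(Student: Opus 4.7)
The plan is to prove this by a local argument at each fixed point, using the fact that isometries intertwine with the exponential map. Let $\phi \in \Isom(M,g)$, set $F = \{x \in M : \phi(x) = x\}$, and let $V$ be a connected component of $F$. Pick $p \in V$; since $\phi(p) = p$, the differential $d\phi_p : T_pM \to T_pM$ is a well-defined linear isometry, and I would write $W_p \subset T_pM$ for its $(+1)$-eigenspace.

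The key step is to establish a local model for $F$ near $p$: choose a neighborhood $U$ of $0 \in T_pM$ on which $\exp_p$ restricts to a diffeomorphism onto a normal neighborhood $\tilde U \ni p$, and show that $\exp_p(W_p \cap U) = F \cap \tilde U$. For the inclusion $\supseteq$, use that an isometry takes geodesics to geodesics: for $v \in W_p$, the curve $t \mapsto \phi(\exp_p(tv))$ is a geodesic with initial point $p$ and initial velocity $d\phi_p(v) = v$, so by uniqueness it coincides with $\exp_p(tv)$, placing every such point in $F$. For the reverse inclusion, if $q = \exp_p(w) \in F \cap \tilde U$, then since $\phi$ is an isometry fixing $p$ one has $\phi(\exp_p(w)) = \exp_p(d\phi_p(w))$, and injectivity of $\exp_p$ on $U$ forces $d\phi_p(w) = w$, i.e.\ $w \in W_p$.

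This identification shows that near $p$, $F$ is an embedded submanifold with $T_pF = W_p$; since $V$ is connected, its dimension is constant and $V$ is a genuine embedded submanifold of $M$. Total geodesicity follows from the inclusion $\supseteq$ above: any geodesic of $M$ starting at $p \in V$ with initial velocity $v \in T_pV = W_p$ stays inside $F$ for small $t$, and by continuity remains in the connected component $V$. Hence every geodesic of $(V, g|_V)$ is a geodesic of $(M,g)$, and Proposition \ref{tgs} delivers $h = 0$.

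The main obstacle is not any single ingenious calculation but the careful bookkeeping needed to upgrade the set-theoretic description of $F$ to a smooth submanifold structure; the exponential chart is what lets us simultaneously read off the local defining equations of $F$ and its tangent space from the single linear object $W_p$, which is why the argument hinges on the commutation $\phi \circ \exp_p = \exp_p \circ \, d\phi_p$.
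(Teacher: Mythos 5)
Your proof is correct and is essentially the classical argument that the paper itself does not reproduce but cites (Klingenberg, pp.\ 95--96): one identifies the fixed point set locally with $\exp_p$ of the $+1$-eigenspace of $d\phi_p$ via the commutation $\phi \circ \exp_p = \exp_p \circ \, d\phi_p$, and reads off both the submanifold structure and total geodesicity from that local model. The only blemish is that your two inclusion labels are swapped --- the geodesic argument establishes $\exp_p(W_p \cap U) \subseteq F \cap \tilde U$, not $\supseteq$ --- but both directions are in fact proved, so this is purely cosmetic.
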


Let $(X,J)$ be a complex manifold. Recall that a smooth map $\sigma:X \rightarrow X$ is holomorphic iff $\sigma_* J = J \sigma_*,$ and it is anti-holomorphic iff $\sigma_* J = -J \sigma_*.$ A bi-anti-holomorphism is an anti-holomorphic map with anti-holomorphic inverse. A bi-anti-holomorphism $\sigma$ satisfies $-J=\sigma^* J := \sigma_*^{-1} J \sigma_*.$ 

An anti-holomorphic involution $\sigma$ of $(X,J)$ is often called a real structure. The real part of $X$ w.r.t.\ $\sigma$ is the fixed point set of $\sigma.$ Note that the fixed point set of a real structure could be empty, however as soon as it is not, we know that it will be a real submanifold of dimension equaling the complex dimension of $X.$ It should be noted that the real part could have more than one connected component. More information on real structures can be found in \cite{REAL}.

Assume that the real dimension of $X$ is $2n.$ An $n$-dimensional submanifold $Y$ of $X$ is said to be totally real if ${T_X}|_Y \simeq T_Y \oplus J(T_Y).$

Let $(X,g,J)$ be a K{\"a}hler manifold. The associated K{\"a}hler form is defined as $\omega(\cdot,\cdot)=g(J\cdot,\cdot).$ So, $g(\cdot,\cdot)=\omega(\cdot,J\cdot).$ In local holomorphic coordinates, $\omega=ig_{j\bar{k}}dz^j \wedge d\bar{z}^k,$ and $g=g_{j \bar{k}}(dz^j \otimes d \bar{z}^k + d \bar{z}^k \otimes dz^j).$

The Ricci form of $\omega$ is given as \[Ric(\omega)(\zeta,\eta):=Ric_g(J\zeta,\eta),\] for any $\zeta,\eta \in \mathfrak{X}(X).$ Certainly, $g$ is K{\"a}hler-Einstein iff $\omega$ is, i.e.\ $Ric_g =\lambda g$ iff $Ric(\omega)=\lambda \omega.$

\subsection{Properties of anti-holomorphic maps}\label{2B}
For the terminology and complex geometry concepts that we make use of here, see \cite{cg}.

\begin{lemma}\label{L1}
Let $X$ be an $n$-dimensional complex manifold and $f:X \rightarrow X$ be an anti-holomorphic map. Then $\partial f^*=f^* \bar{\partial}$ and $\bar{\partial} f^* = f^* \partial.$
\end{lemma}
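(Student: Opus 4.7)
The plan is to exploit two general facts: (i) pullback commutes with the exterior derivative, i.e. $f^*d=df^*$ for any smooth map $f$; (ii) anti-holomorphic pullback swaps the Hodge bidegree. Together with the decomposition $d=\partial+\bar\partial$, these give the lemma by comparing bidegrees on both sides of $f^*d=df^*$.

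First I would establish (ii) via local holomorphic coordinates. Writing $f$ in local coordinates $(z^1,\ldots,z^n)\mapsto (w^1,\ldots,w^n)$ with $w^j=f^j(z,\bar z)$, the condition $f_*J=-Jf_*$ is equivalent to $\partial f^j/\partial z^k=0$ for all $j,k$; that is, each component $f^j$ is anti-holomorphic. Hence
\[
f^*(dw^j)=d(f^j)=\sum_k \frac{\partial f^j}{\partial \bar z^k}\,d\bar z^k\in \Omega^{0,1},\qquad f^*(d\bar w^j)=d(\overline{f^j})\in \Omega^{1,0}.
\]
Because $f^*$ is multiplicative on forms, it follows by induction on form-degree that $f^*\bigl(\Omega^{p,q}\bigr)\subseteq \Omega^{q,p}$.

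Next I would combine this with $f^*d=df^*$. For $\alpha\in\Omega^{p,q}$, decompose
\[
f^*d\alpha=f^*\partial\alpha+f^*\bar\partial\alpha,\qquad df^*\alpha=\partial f^*\alpha+\bar\partial f^*\alpha.
\]
By (ii), $f^*\partial\alpha\in\Omega^{q,p+1}$ and $f^*\bar\partial\alpha\in\Omega^{q+1,p}$, while $\partial f^*\alpha\in\Omega^{q+1,p}$ and $\bar\partial f^*\alpha\in\Omega^{q,p+1}$. Since the decomposition $\Omega^k=\bigoplus_{p+q=k}\Omega^{p,q}$ is direct, the equality $f^*d\alpha=df^*\alpha$ forces the two identifications
\[
\partial f^*\alpha=f^*\bar\partial\alpha,\qquad \bar\partial f^*\alpha=f^*\partial\alpha,
\]
which is precisely the claim.

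The main obstacle, if one can call it that, is fact (ii): checking carefully that anti-holomorphicity of $f$ really swaps $\Omega^{p,q}$ with $\Omega^{q,p}$, as opposed to merely preserving total degree. Once this is verified on the generators $dz^j,\,d\bar z^j$, multiplicativity of pullback does the rest, and the lemma follows by a clean bidegree bookkeeping argument rather than any substantive calculation.
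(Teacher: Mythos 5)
Your proposal is correct and follows essentially the same route as the paper: both first verify on the generators $dz^j,\ d\bar z^j$ that anti-holomorphicity makes $f^*$ type-reversing, $f^*(\Omega^{p,q})\subseteq\Omega^{q,p}$, and then combine this with $f^*d=df^*$ to match bidegree components (the paper phrases the bookkeeping via the projection operators $\Pi^{p,q}$, you via directness of the decomposition $\Omega^k=\bigoplus_{p+q=k}\Omega^{p,q}$, which is the same argument).
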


\begin{proof}
Let $p \in X,$ $(U_i, \phi_i)$ be a chart containing $p,$ $(U_j,\phi_j)$ be a chart containing $f(p),$ and suppose that $f(U_i) \subseteq U_j.$ Write $f_k$ for the $k^{th}$ component of $\phi_j \circ f \circ \phi_i^{-1},$ which is a function of $(z^1,\dots,z^n) \in \phi_i(U_i) \subset \mathbb{C}^n.$ Interpreting partial derivatives in the adequate sense, we have that

\[f^*(dz^k)=\frac{\partial f_k}{\partial z^m} dz^m + \frac{\partial f_k}{\partial \bar{z}^m} d\bar{z}^m=  \frac{\partial f_k}{\partial \bar{z}^m} d\bar{z}^m,\]

since $f_k,$ as the composition of the holomorphic coordinate function $z^k$ and the anti-holomorphic function $f \circ \phi_i^{-1},$ is anti-holomorphic. 
Similarly, 

\[f^*(d\bar{z}^k)=\frac{\partial \bar{f}_k}{\partial z^m} dz^m + \frac{\partial \bar{f}_k}{\partial \bar{z}^m} d\bar{z}^m=  \frac{\partial \bar{f}_k}{\partial z^m} dz^m,\]

since $f_k$ is anti-holomorphic and hence $\bar{f}_k$ is holomorphic. These calculations imply that the pull-back $f^*$ induces type-reversing maps $\Omega^{p,q}(X) \rightarrow \Omega^{q,p} (X).$ 

Let $\Pi^{p,q}: \Omega^{k}(X)=\bigoplus_{r+s=k} \Omega^{r,s}(X) \rightarrow \Omega^{p,q}(X)$ be the projection operator so that $\partial= \Pi^{p+1,q} \circ d$ and $\bar{\partial}= \Pi^{p,q+1} \circ d,$ where $d:\Omega^{p,q}(X) \rightarrow \Omega^{p+1,q} (X) \oplus \Omega^{p,q+1} (X)$ is the $\mathbb{C}-$linear extension of the exterior derivative. Let $\beta=\sum_{r+s=k} b_{i_r,j_s} dz^{i_1} \wedge \dots \wedge dz^{i_r} \wedge d \bar{z}^{j_1} \wedge \dots  \wedge d \bar{z}^{j_s} \in \Omega^k(X).$ Then, 

\begin{equation*}
\begin{split}
f^* \big(\Pi^{p,q} \beta \big) & = f^*(b_{i_p,j_q} dz^{i_1} \wedge \dots \wedge dz^{i_p} \wedge d \bar{z}^{j_1} \wedge \dots  \wedge d \bar{z}^{j_q}) \\
&= (b_{i_p,j_q} \circ f) (f^*dz^{i_1}) \wedge \dots  (f^*dz^{i_p}) \wedge (f^* d \bar{z}^{j_1}) \wedge \dots  \wedge (f^*d \bar{z}^{j_q}) \\
&= \Pi^{q,p} \Big( \sum_{r+s=k} (b_{i_r,j_s} \circ f) (f^*dz^{i_1}) \wedge \dots  (f^*dz^{i_r}) \wedge (f^* d \bar{z}^{j_1}) \wedge \dots  \wedge (f^*d \bar{z}^{j_s}) \Big) \\
& = \Pi^{q,p} (f^* \beta).
\end{split}
\end{equation*}

Therefore, for any $ \gamma \in \Omega^{p,q} (X),$

\begin{equation*}
\begin{split}
\partial (f^* \gamma) & = \big( \Pi^{q+1,p} \circ d \big) (f^* \gamma) \\
&=\big( \Pi^{q+1,p} \circ f^* \big)d \gamma \\
&=f^* \big(\Pi^{p,q+1} \circ d) \gamma \\
&=f^*(\bar{\partial} \gamma). 
\end{split}
\end{equation*}

A similar computation shows that also $\bar{\partial} (f^* \gamma ) = f^* (\partial \gamma).$ 
\end{proof}

\begin{lemma}\label{spg}
Let $(X,g,J)$ be a K{\"a}hler manifold with K{\"a}hler form $\omega.$ An anti-holomorphic anti-isometry of $(X,\omega)$ is an isometry of $(X,g).$
\end{lemma}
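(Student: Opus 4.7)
The plan is to verify $\sigma^* g = g$ directly from the definitions, using the Kähler compatibility relation $g(\cdot,\cdot) = \omega(\cdot, J\cdot)$ noted earlier in section \ref{2A}. The two hypotheses on $\sigma$ each contribute one sign: being anti-holomorphic gives $J\sigma_* = -\sigma_* J$, and being an anti-isometry of $(X,\omega)$ means $\sigma^*\omega = -\omega$. I expect these two signs to cancel, producing the desired equality.

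Concretely, for arbitrary $\zeta,\eta \in \mathfrak{X}(X)$, I would expand
\begin{equation*}
(\sigma^* g)(\zeta,\eta) = g(\sigma_* \zeta, \sigma_* \eta) = \omega(\sigma_* \zeta, J \sigma_* \eta),
\end{equation*}
then use anti-holomorphicity to replace $J\sigma_* \eta$ by $-\sigma_* J \eta$, recognize the resulting expression as $-(\sigma^*\omega)(\zeta, J\eta)$, and apply $\sigma^*\omega = -\omega$ to recover $\omega(\zeta, J\eta) = g(\zeta, \eta)$.

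There is essentially no obstacle here; the only thing to be careful about is the order and placement of the two sign flips, and making sure the pointwise evaluation is unambiguous. I would present the computation as a short display equation tracking the signs. A brief concluding remark is worth adding, that by a symmetric argument an anti-holomorphic isometry of $(X,g)$ is automatically an anti-isometry of $(X,\omega)$, so the two notions coincide in the anti-holomorphic category — this is the fact that makes real structures on Kähler manifolds behave uniformly with respect to both the metric and the symplectic data, and which will presumably be used implicitly in the sequel.
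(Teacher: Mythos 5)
Your proposal is correct and follows essentially the same chain of equalities as the paper's own proof: expand $(\sigma^*g)(\zeta,\eta)$ via $g(\cdot,\cdot)=\omega(\cdot,J\cdot)$, move $J$ past $\sigma_*$ at the cost of one sign, and cancel it against $\sigma^*\omega=-\omega$. The closing remark about the converse is a harmless addition but not needed for the statement.
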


\begin{proof}
Suppose that $f^* \omega=-\omega.$ For any vector fields $\zeta,\eta \in \mathfrak{X}(X),$ we have that 

\begin{equation*}
\begin{split}
(f^*g)(\zeta,\eta)&=g(f_* \zeta,f_* \eta)\\
&=\omega(f_* \zeta, Jf_* \eta)\\
&=-\omega(f_* \zeta, f_* J\eta)\\
&=-(f^* \omega)(\zeta,J\eta)\\
&=\omega(\zeta,J\eta)\\
&=g(J\zeta,J\eta)\\
&=g(\zeta,\eta),
\end{split}
\end{equation*}
so $f$ is an isometry of $(X,g).$
\end{proof}

\begin{lemma}\label{L2}
Let $(X,g,J)$ be a K{\"a}hler manifold with K{\"a}hler form $\omega,$ and $Y$ be an open complex submanifold. Suppose that the induced K{\"a}hler form $\omega|_Y$ is given in terms of a K{\"a}hler potential $\psi,$ and that there is an anti-holomorphic map $f:X \rightarrow X$ that preserves $Y$ and that satisfies $f^* \psi = \psi.$ Then, $f$ is an anti-isometry of $(Y, \omega|_Y),$ hence an isometry of $(Y, g|_Y).$ 

\begin{proof}
By Lemma \ref{L1}, it follows that

\begin{equation*}
\begin{split}
f^* \omega & = i f^* (\partial \bar{\partial} \psi) \\
&=i \bar{\partial} \partial f^* \psi \\
&=-i\partial \bar{\partial}\psi \\
&=-\omega.
\end{split}
\end{equation*}

Lemma \ref{spg} implies that $f$ is an isometry of $(Y, g|_Y).$
\end{proof}
\end{lemma}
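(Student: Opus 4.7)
The plan is to reduce the statement to Lemma \ref{spg} by directly computing $f^*\omega|_Y$ using the Kähler potential and the commutation rules from Lemma \ref{L1}. If I can show that $f^*\omega|_Y=-\omega|_Y$, i.e.\ that $f$ is an anti-isometry of $(Y,\omega|_Y)$, then an appeal to Lemma \ref{spg} (applied to the Kähler manifold $(Y,g|_Y,J|_Y)$) finishes the proof.

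First I would use the fact that, by hypothesis, $\omega|_Y=i\,\partial\bar{\partial}\psi$ on $Y$. Since $f$ preserves $Y$ and $f^*$ is natural under restriction, this gives $f^*\omega|_Y = i\,f^*(\partial\bar{\partial}\psi)$. The key step is then to invoke Lemma \ref{L1} twice to move $f^*$ past the two differentials: since $f$ is anti-holomorphic, $\partial f^*=f^*\bar{\partial}$ and $\bar{\partial} f^*=f^*\partial$, so
\[
f^*(\partial\bar{\partial}\psi)=\bar{\partial}\bigl(f^*(\bar{\partial}\psi)\bigr)=\bar{\partial}\partial(f^*\psi).
\]
The hypothesis $f^*\psi=\psi$ then lets me substitute $\psi$ in the innermost position, yielding $f^*\omega|_Y=i\,\bar{\partial}\partial\psi=-i\,\partial\bar{\partial}\psi=-\omega|_Y$, where I used the standard identity $\bar{\partial}\partial=-\partial\bar{\partial}$.

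With the anti-isometry property established, the second conclusion is immediate: Lemma \ref{spg} applied to the restriction of $f$ to the open complex (hence Kähler) submanifold $Y$ turns the anti-isometry with respect to $\omega|_Y$ into an isometry with respect to $g|_Y$.

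I do not expect a serious obstacle here. The only subtlety is bookkeeping — namely, making sure the potential formula $\omega|_Y=i\,\partial\bar{\partial}\psi$ and the commutations of Lemma \ref{L1} are valid on $Y$ rather than on $X$, which is automatic since $Y$ is open in $X$ and $f$ preserves $Y$, so all operators $\partial$, $\bar\partial$, and $f^*$ restrict compatibly.
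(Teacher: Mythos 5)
Your argument is exactly the paper's proof: apply Lemma \ref{L1} twice to get $f^*(\partial\bar{\partial}\psi)=\bar{\partial}\partial(f^*\psi)$, use $f^*\psi=\psi$ and $\bar{\partial}\partial=-\partial\bar{\partial}$ to conclude $f^*\omega|_Y=-\omega|_Y$, then invoke Lemma \ref{spg}. No gaps; this matches the paper's reasoning step for step.
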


\subsection{Examples of totally geodesic real parts}\label{2C}

\begin{example}\label{exi1}
Consider the Fubini-Study form $\omega_{FS}$ on $\mathbb{CP}^n.$ Recall that to any point $[z_0:\dots:z_n]$ in the open subset $U_i=\{[z_0:\dots:z_n] \in \mathbb{CP}^n | z_i \neq 0\}$ corresponds the coordinate $\big(\frac{z_0}{z_i},\dots,\frac{z_{i-1}}{z_i},\frac{z_{i+1}}{z_i},\dots,\frac{z_n}{z_i}\big) \in \mathbb{C}^n.$ For each $j\neq i,$ put $w_j=\frac{z_j}{z_i}$ and $\|w\|^2=\sum_{j\neq i} |w_j|^2.$ Then, $\omega_{FS}=i \partial \bar{\partial} \log{(1 + \|w\|^2)}.$ The associated Riemannian metric is \[g=\Big(\frac{(1 + \|w\|^2)\delta_a^{\bar{b}} - \bar{w}_a w_b}{(1 + \|w\|^2)^2}\Big)(dw_a \otimes d \bar{w}_b + d \bar{w}_b \otimes dw_a).\]

Define a complex conjugation $\sigma:\mathbb{CP}^n \rightarrow \mathbb{CP}^n$ by $\sigma([z_0:\dots:z_n])=[\bar{z}_0:\dots:\bar{z}_n].$ The map $\sigma$ is anti-holomorphic, and in fact, it is a real structure. Observe that \[\sigma^* \log{(1 + \|w\|^2)}= \log{\big(1 + \sum_{j\neq i} |\bar{w}_j|^2\big)}= \log{(1 + \|w\|^2)}.\] So by Lemma \ref{L2}, $\sigma^* g=g$ on each K{\"a}hler submanifold $U_i \subset \mathbb{CP}^n,$ and this is precisely what it means for $\sigma$ to be an isometry of $(\mathbb{CP}^n,g).$ 

The real part of $\mathbb{CP}^n$ w.r.t.\ $\sigma$ is $\{[z_0:\dots:z_n] \in \mathbb{CP}^n | \Im(z_i) = 0 \mbox{ for all }i\}=\mathbb{RP}^n.$ By Theorem \ref{totig}, it follows that $(\mathbb{RP}^n, g|_{\mathbb{RP}^n})$ is totally geodesic. 
\end{example}

It is worth mentioning that $\mathbb{RP}^n$ is one of the simplest examples of a real toric variety, as defined in \cite{SF}. Moreover, $\mathbb{RP}^n$ can be turned into a small cover of the standard $n$-simplex by taking the real torus action to be the induced subgroup action of $(\mathbb{Z}_2)^n \subseteq (\mathbb{R}^*)^n \subseteq (\mathbb{C}^*)^n.$ Small covers were introduced in \cite{DJ}.

\begin{example}\label{nokia}
Let $f_1,\dots,f_k \in \mathbb{C}[z_0,\dots,z_n]$ be homogeneous polynomials with real coefficients, i.e.\ $f_i=\sum_{j=1}^n q_j z_0^{\alpha_0^j}\dots z_n^{\alpha_n^j},$ where $\sum_{k=0}^n \alpha_k^j$ is a constant and $q_j \in \mathbb{R}.$ If $V:=\mathbb{V}(f_1,\dots,f_k) \subseteq \mathbb{CP}^n$ is smooth, then $(V, \omega_{FS}|_V)$ is a K{\"a}hler manifold, and the restricted complex conjugation map $\sigma|_V$ is an isometry of $(V, g|_V)$ (c.f.\ Example \ref{exi1}). Note that the fixed point set of $\sigma|_V$ is the real part $V \cap \mathbb{RP}^n.$ So by Theorem \ref{totig}, $V \cap \mathbb{RP}^n$ with the induced metric is a totally geodesic submanifold of $(V,g|_V).$ 

Here is a specific instance of this construction: take $f=\sum_{j=1}^{n+1} z_j^2 - z_0^2,$ and consider the variety $V:=\mathbb{V}(f) \subset \mathbb{CP}^{n+1}.$ The fixed point set of $\sigma|_V$ is \[V \cap \mathbb{RP}^{n+1} \simeq S^n,\] which must then be a totally geodesic submanifold of $(V,g|_V).$ 
\end{example}

We conclude this section with an observation about toric varieties, and their totally geodesic subvarieties, which is Proposition \ref{ttoorrii}. The proof of this proposition makes use of the following topological remark.

\begin{lemma}\label{auxl} 
Let $T$ and $T'$ be topological spaces and assume that $T'$ is Hausforff. Let $f_1, f_2:T \rightarrow T'$ be continuous maps and $U \subseteq T$ be a dense subset. If ${f_1}|_U = {f_2}|_U,$ then $f_1=f_2$ on $T.$
\end{lemma}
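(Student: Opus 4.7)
The plan is to reduce the statement to the standard fact that the equalizer of two continuous maps into a Hausdorff space is closed. Once this is established, the result follows immediately: any closed subset of $T$ that contains the dense set $U$ must equal all of $T$.

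More concretely, I would introduce the equalizer $E := \{t \in T : f_1(t) = f_2(t)\} \subseteq T$. The hypothesis ${f_1}|_U = {f_2}|_U$ says exactly that $U \subseteq E$, so once $E$ is shown to be closed, one gets $T = \overline{U} \subseteq \overline{E} = E$, i.e.\ $f_1 = f_2$ on all of $T$.

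To verify that $E$ is closed, I would use the continuous map $(f_1, f_2) : T \to T' \times T'$ defined by $t \mapsto (f_1(t), f_2(t))$, whose continuity follows from that of the two components. The diagonal $\Delta := \{(y, y) : y \in T'\}$ is closed in $T' \times T'$ precisely because $T'$ is Hausdorff, and consequently $E = (f_1, f_2)^{-1}(\Delta)$ is closed as the preimage of a closed set under a continuous map.

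There is no substantive obstacle here; the only point worth emphasizing is that the Hausdorff hypothesis on $T'$ is essential and enters exactly once, at the step where one invokes closedness of the diagonal. If one prefers to avoid the product picture, the same argument can be rephrased directly: for any $t \in T$ with $f_1(t) \neq f_2(t)$, pick disjoint open neighborhoods $V_1 \ni f_1(t)$ and $V_2 \ni f_2(t)$ in $T'$, and observe that $f_1^{-1}(V_1) \cap f_2^{-1}(V_2)$ is an open neighborhood of $t$ disjoint from $E$, showing that $T \setminus E$ is open.
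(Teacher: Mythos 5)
Your proof is correct and follows essentially the same route as the paper: both identify the equalizer as the preimage of the diagonal under the continuous map $(f_1,f_2):T\to T'\times T'$, use the Hausdorff hypothesis to conclude the diagonal (hence the equalizer) is closed, and finish by density of $U$. The alternative neighborhood-separation argument you sketch is a fine unwinding of the same idea but is not needed.
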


\begin{proof}
Let $C=\{x \in T | f_1(x)=f_2(x)\} \subseteq T.$ Define a function $F:T \rightarrow T' \times T'$ by $F(x)=(f_1(x),f_2(x)),$ which is continuous. Since products and subspaces of Hausdorff spaces are Hausdorff, $T' \times T'$ is Hausdorff, and so is $F(T) \subseteq T' \times T'.$ Hence, the diagonal $\Delta=\{(a,b)| a=b\} \subset F(T)$ is closed. But since $C=F^{-1}(\Delta),$ $C$ is a closed subspace of $T.$ By definition, the closure $\cl(U)$ is the smallest closed subset of $T$ that contains $U,$ and clearly $U \subseteq C.$ Therefore, $T=cl(U) \subseteq C.$ But then $T=C,$ which means that $f_1=f_2$ on $T.$
\end{proof}

\begin{proposition}\label{ttoorrii}
Let $X$ be a smooth projective toric variety of dimension $n,$ equipped with a torus invariant K{\"a}hler form $\omega.$ Let $g$ denote the Riemannian metric defined by $\omega$ and the given complex structure. The real part of $X$ with respect to a real structure extending coordinate-wise complex conjugation on $(\mathbb{C}^*)^n$ is a totally geodesic submanifold of $(X, g).$ 
\end{proposition}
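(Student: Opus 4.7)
The plan is to show that $\sigma$ is an isometry of $(X,g)$ and then invoke Theorem \ref{totig}. Let $T := (\mathbb{C}^*)^n \subset X$ denote the open dense torus orbit, on which $\sigma$ restricts to coordinate-wise complex conjugation by hypothesis. Clearly $\sigma$ preserves $T$, so the strategy splits naturally into a calculation on the open torus followed by a density argument.

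First, I would produce an $(S^1)^n$-invariant K\"ahler potential for $\omega$ on $T$. Introducing logarithmic coordinates $z_j = e^{x_j + i\theta_j}$, a standard argument from toric K\"ahler geometry shows that any $(S^1)^n$-invariant K\"ahler form on $(\mathbb{C}^*)^n$ can be written as $\omega|_T = i\partial\bar\partial\psi$ for a smooth function $\psi$ depending only on the real parts $x_j = \log|z_j|$. (If one prefers to avoid invoking this as a black box, one can cover $T$ by $(S^1)^n$-invariant neighbourhoods on which local potentials exist, average each over the compact action to obtain an invariant local potential, and argue locally throughout.) Since $\sigma|_T$ fixes every modulus $|z_j|$, it fixes each $x_j$, whence $\sigma^*\psi = \psi$. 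Applying Lemma \ref{L2} with $Y = T$ then yields that $\sigma|_T$ is an isometry of $(T, g|_T)$; equivalently, $\sigma^*g = g$ on $T$.

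To upgrade this to an equality on all of $X$, I would observe that $\sigma^*g$ and $g$ are smooth $(0,2)$-tensor fields on $X$ that agree on the dense open subset $T$. Viewing them as continuous maps from $X$ into the (Hausdorff) total space of $T^*X \otimes T^*X$, Lemma \ref{auxl} forces $\sigma^*g = g$ globally on $X$. Hence $\sigma$ is a genuine isometry of $(X,g)$, and Theorem \ref{totig} immediately implies that each connected component of its fixed point set---which is by definition the real part of $X$---is a totally geodesic submanifold.

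The main obstacle is the opening step: establishing the existence of an $(S^1)^n$-invariant K\"ahler potential on $T$. Because $T$ is not simply connected, a naive appeal to the $\partial\bar\partial$-lemma is not available, and it is essential to use the compact torus invariance of $\omega$ so that the potential one constructs can be taken to depend only on the moduli $|z_j|$. That dependence is precisely what makes the potential invariant under the conjugation $\sigma$, and hence what drives the rest of the proof.
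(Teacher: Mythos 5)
Your proposal is correct and follows essentially the same route as the paper: both produce a torus-invariant K\"ahler potential on the open dense orbit $(\mathbb{C}^*)^n$ depending only on the moduli $\log|z_j|$, apply Lemma \ref{L2} to get that $\sigma$ is an isometry there, extend to all of $X$ by the density argument of Lemma \ref{auxl}, and conclude with Theorem \ref{totig}. Your extra care in justifying the existence of the invariant potential (which the paper simply asserts as standard) is a welcome refinement but not a different argument.
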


\begin{proof}
Let $T_{\mathbb{C}}^n \subseteq X$ denote the open dense $(\mathbb{C}^*)^n$-orbit. Then, $\omega|_{T_{\mathbb{C}}^n}=i \partial \bar{\partial} f,$ for a convex real-valued function $f$ of the real variables $x_i=\log{|z_i|^2},$ where the $z_i$ are the standard holomorphic coordinates on $(\mathbb{C}^*)^n.$ 

Consider an anti-holomorphic involution $\sigma:X \rightarrow X$ such that the restriction $\sigma|_{T_{\mathbb{C}}^n}$ is complex conjugation. Then, $\sigma|_{T_{\mathbb{C}}^n}^* f =f,$ and so by Lemma \ref{L2}, $\sigma|_{T_{\mathbb{C}}^n}$ is an anti-isometry of $(T_{\mathbb{C}}^n, \omega|_{T_{\mathbb{C}}^n})$ and an isometry of $(T_{\mathbb{C}}^n, g|_{T_{\mathbb{C}}^n}).$ 

We can think of both $\sigma^* g$ and $g$ as continuous functions from the toric manifold $X$ into the space $B$ of all symmetric bilinear forms on each tangent space of $X,$ which is a manifold, and hence is Hausdorff. Applying Lemma \ref{auxl} to $f_1=\sigma^* g,$ $f_2=g,$ $T=X,$ $T'=B,$ and $U=T_{\mathbb{C}}^n,$ which recall is open and dense in $X,$ we conclude that $\sigma$ is an isometry of $(X,g).$ The claim now follows from Theorem \ref{totig}. 
\end{proof}

\section{Submanifolds of K{\"a}hler-Einstein manifolds}\label{3}

In this section, we prove the main theorem, which is Theorem \ref{maint} (see also Definition \ref{maindef}). The proof involves several steps. The main idea is to exploit uniqueness of K{\"a}hler-Einstein metrics, and the behavior of bi-anti-holomorphisms. We are interested in those cases where bi-anti-holomorphic mappings give rise to totally real and totally geodesic Einstein submanifolds. Note that total realness or being totally geodesic (or both) is not enough to deduce the Einstein condition from an ambient K{\"a}hler-Einstein condition.

Let $X$ be a compact K{\"a}hler manifold. Suppose that $X$ admits a K{\"a}hler-Einstein (KE) metric $\omega.$ Thus, $\omega$ satisfies the equation $Ric(\omega)=\lambda \omega,$ for some real number $\lambda.$ After rescaling $\omega$ by the factor $|\lambda|^{-1},$ we may assume that the Einstein constant is either $\pm 1$ or $0.$ Recall that $c_1(X)=\frac{1}{2\pi} [Ric(\omega)].$ So if $Ric(\omega)=\omega,$ then $c_1(X)>0;$ if $Ric(\omega)=0,$ then $c_1(X)=0;$ and if $Ric(\omega)=-\omega,$ then $c_1(X)<0.$ Assume that $c_1(X)>0,$ and let $\omega'$ be another KE form on $X.$ Since the $1^{st}$ Chern class of $X$ is independent of $\omega,$ $[\omega']=2\pi c_1(X)=[\omega],$ so $\omega' \in 2\pi c_1(X).$ In brief, when $c_1 (X)>0,$ any KE form on $X$ must belong to $2\pi c_1(X).$ Similarly, if $c_1(X)<0,$ then any KE form on $X$ must belong to $-2\pi c_1(X).$

Let $\Aut_0(X)$ be the identity component of the group of holomorphic automorphisms of a Fano KE manifold $X.$ This group acts on the set of K{\"a}hler metrics in the class $2\pi c_1(X),$ where the action is $(\omega,f) \mapsto f^* \omega.$ In fact, the subset of KE metrics is a single $\Aut_0(X)$-orbit \cite{BM}. 

\begin{theorem}\label{uniq}
Let $X$ be a compact K{\"a}hler manifold. 

\begin{enumerate}
\item If $c_1(X)<0,$ then $X$ carries a unique K{\"a}hler-Einstein metric $\omega \in -2\pi c_1(X),$ where $Ric(\omega)=-\omega$ \cite{Aubin, Yau}.

\item If $c_1(X)=0,$ then each K{\"a}hler class of $X$ contains a unique K{\"a}hler-Einstein metric, where $Ric(\omega)=0$ \cite{Yau}.

\item If $c_1(X)>0,$ and if a K{\"a}hler-Einstein metric $\omega \in 2\pi c_1(X)$ exists, in which case, $Ric(\omega)=\omega,$ then it is unique up to the action of $\Aut_0(X),$ i.e.\ if $\omega' \in 2\pi c_1(X)$ is another K{\"a}hler-Einstein form, then there must exist an $a \in \Aut_0(X)$ such that $\omega'=a^* \omega$ \cite{BM}.
\end{enumerate}
\end{theorem}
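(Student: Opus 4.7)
The plan is to recast the K\"ahler-Einstein condition as a complex Monge-Amp\`ere equation and analyze it case by case. Fix a reference K\"ahler form $\omega_0$ in the relevant cohomology class ($-2\pi c_1(X)$, the prescribed K\"ahler class, or $2\pi c_1(X)$, according to cases (1), (2), (3)). By the $\partial\bar\partial$-lemma, any other K\"ahler form in the class takes the shape $\omega_\varphi = \omega_0 + i\partial\bar\partial\varphi$, and using $Ric(\omega_\varphi) = Ric(\omega_0) - i\partial\bar\partial\log(\omega_\varphi^n/\omega_0^n)$, the equation $Ric(\omega_\varphi) = \lambda\omega_\varphi$ becomes
\[
\omega_\varphi^n = e^{-\lambda\varphi + F_0}\,\omega_0^n,
\]
where $F_0$ is a Ricci potential normalized by $Ric(\omega_0) - \lambda\omega_0 = i\partial\bar\partial F_0$. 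Uniqueness and existence of the KE form thus reduce to those questions for this PDE.

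In case (1), $\lambda=-1$, so the equation is $\omega_\varphi^n = e^{\varphi+F_0}\omega_0^n$, and uniqueness follows from the maximum principle: if $\varphi_1,\varphi_2$ are two solutions and $u := \varphi_1-\varphi_2$ achieves its maximum at $p_0$, then $i\partial\bar\partial u \leq 0$ at $p_0$, so $\omega_{\varphi_1}^n(p_0) \leq \omega_{\varphi_2}^n(p_0)$, forcing $e^{u(p_0)} \leq 1$; by symmetry $u \equiv 0$. In case (2), $\lambda=0$ gives $\omega_{\varphi_1}^n = \omega_{\varphi_2}^n$, and integration by parts (using closedness of every $\omega_{\varphi_j}^k \wedge \omega_{\varphi_\ell}^{n-1-k}$) yields
\[
0 \,=\, \int_X u\,\bigl(\omega_{\varphi_1}^n - \omega_{\varphi_2}^n\bigr) \,=\, -\sum_{k=0}^{n-1} \int_X i\partial u \wedge \bar\partial u \wedge \omega_{\varphi_1}^k \wedge \omega_{\varphi_2}^{n-1-k},
\]
and pointwise positivity of the $(1,1)$-form $i\partial u \wedge \bar\partial u$ against each positive $(n-1,n-1)$-factor forces $\partial u = 0$, hence $\omega_{\varphi_1} = \omega_{\varphi_2}$. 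Existence in both cases is the continuity method applied to a one-parameter family of Monge-Amp\`ere equations; the real substance lies in the $C^0$, $C^2$, and higher a priori estimates, which is the core content of Aubin's and Yau's theorems and which I would invoke rather than reprove.

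Case (3) is the genuine obstacle. Here $\lambda=1$, the sign of the $\varphi$-term in the PDE undermines the maximum-principle argument, and because $\Aut_0(X)$ of a Fano KE manifold may be positive-dimensional, the sharp statement is only uniqueness up to an automorphism orbit. Following Bando-Mabuchi, I would join two KE forms $\omega_0, \omega_1 \in 2\pi c_1(X)$ by a smooth one-parameter family of solutions of the twisted Aubin equations $Ric(\omega_t) = t\omega_t + (1-t)\alpha$ for a suitable closed positive $(1,1)$-form $\alpha$, then differentiate the family in $t$ to extract a real holomorphic vector field whose time-one flow realizes the identification $\omega_1 = a^*\omega_0$ with $a \in \Aut_0(X)$. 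The hard part is openness and closedness along this twisted continuity path, which hinges on the Matsushima-Lichnerowicz decomposition of holomorphic vector fields on a Fano KE manifold and on uniform estimates for the twisted equations. A cleaner modern alternative is the Berman-Berndtsson route: show that the Ding (or Mabuchi) functional is convex along weak geodesics in the space of K\"ahler potentials, and deduce uniqueness up to $\Aut_0(X)$ from the fact that KE metrics are its minimizers, with equality on a geodesic only for automorphism-related endpoints.
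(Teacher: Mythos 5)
The paper offers no proof of this theorem: it is stated purely as a citation of Aubin and Yau (case 1), Yau (case 2), and Bando--Mabuchi (case 3), and is used as a black box in the proof of Theorem \ref{maint}. Your proposal therefore goes further than the paper does, and what you write is a correct outline of the standard arguments. The reduction to the Monge--Amp\`ere equation $\omega_\varphi^n = e^{-\lambda\varphi+F_0}\omega_0^n$ is right (up to an additive constant in $F_0$ that can be absorbed), and the two uniqueness arguments you give in full --- the maximum principle at a maximum of $u=\varphi_1-\varphi_2$ when $\lambda=-1$, and the integration-by-parts identity $\int_X u\,(\omega_{\varphi_1}^n-\omega_{\varphi_2}^n)=-\sum_k\int_X i\partial u\wedge\bar\partial u\wedge\omega_{\varphi_1}^k\wedge\omega_{\varphi_2}^{n-1-k}$ when $\lambda=0$ --- are complete and correct as stated. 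The parts you defer (the a priori estimates underlying existence via the continuity method, and the openness/closedness analysis along the Bando--Mabuchi twisted path, or alternatively Berman--Berndtsson convexity of the Ding functional along weak geodesics) are exactly the hard analytic content of the cited papers, and it is reasonable to invoke rather than reprove them; the paper itself invokes the entire theorem. In short, your sketch is faithful to the sources the paper cites and contains no errors, but one should be clear that cases (1) and (3) in particular rest on deep results that neither you nor the paper reproduce.
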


If $c_1(X)>0,$ then a KE metric does not always exist. A Fano manifold is KE iff its anti-canonical polarization satisfies an algebro-geometrical condition known as K-stability \cite{CDS, Tian}. On that note, it could be interesting to develop a notion of K-stability over $\mathbb{R}$ with the purpose of obtaining a new necessary and sufficient condition for the existence of an Einstein metric on a compact real manifold. The results of the present work can serve as a testing ground for these ideas. 

Next, we prove a number of preliminary results that will go into the proof of Theorem \ref{maint}. When needed, we work with bi-anti-holomorphic maps instead of anti-holomorphic ones. This is to ensure that the pullback of the complex structure in question is defined, and also to prevent the pullback of K{\"a}hler forms being degenerate.

\begin{lemma}\label{totrk}
Let $n\geq 1,$ $(X,J)$ be a complex manifold of real dimension $2n,$ $f:(X,J) \to (X,J)$ be an anti-holomorphic mapping, and denote its fixed point set by $X^f.$ If $X^f$ is an $n$-dimensional submanifold of $X,$ then $X^f$ is totally real. In particular, \[f_*|_{T_{X^f}}=\operatorname{Id}_{T_{X^f}} \mbox{ and } f_*|_{J(T_{X^f})}=-\operatorname{Id}_{J(T_{X^f})}.\]
\end{lemma}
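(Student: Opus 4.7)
The plan is to work pointwise on the fixed locus and exploit the anti-commutation relation $f_* \circ J = -J \circ f_*$ to split the ambient tangent space into a $(+1)$-eigenspace and a $(-1)$-eigenspace of $f_*$.

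Fix any $p \in X^f$. Since $f(p) = p$, the differential gives an $\mathbb{R}$-linear endomorphism $A := f_{*,p} \colon T_pX \to T_pX$, and because $f$ restricts to the identity on the submanifold $X^f$, we immediately get $A v = v$ for every $v \in T_p X^f$. This is both the first desired formula $f_*|_{T_{X^f}} = \operatorname{Id}_{T_{X^f}}$ and the statement that $T_p X^f$ lies in the $(+1)$-eigenspace of $A$.

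Next I would apply the anti-holomorphy condition $A J = -JA$ at $p$. For any $v \in T_p X^f$, this gives
\[
A(Jv) \;=\; -J(Av) \;=\; -Jv,
\]
so the $n$-dimensional subspace $J(T_pX^f)$ is contained in the $(-1)$-eigenspace of $A$. This also supplies the second formula $f_*|_{J(T_{X^f})} = -\operatorname{Id}_{J(T_{X^f})}$. Since eigenspaces for distinct eigenvalues of an $\mathbb{R}$-linear map intersect trivially, $T_p X^f \cap J(T_p X^f) = \{0\}$.

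To conclude, I would run a real-dimension count: $J$ is an $\mathbb{R}$-linear isomorphism of $T_pX$, so $\dim_{\mathbb{R}} J(T_pX^f) = n$, and together with $\dim_{\mathbb{R}} T_pX^f = n$ and the trivial intersection just established, the sum $T_pX^f \oplus J(T_pX^f)$ has real dimension $2n = \dim_{\mathbb{R}} T_pX$. Therefore $T_pX = T_pX^f \oplus J(T_pX^f)$ at every $p \in X^f$, which is precisely the totally real condition $T_X|_{X^f} \simeq T_{X^f} \oplus J(T_{X^f})$. I do not expect any real obstacle: the entire argument is a pointwise linear-algebraic consequence of anti-holomorphy together with the dimension hypothesis, which is exactly what prevents $J(T_pX^f)$ from overlapping non-trivially with $T_pX^f$.
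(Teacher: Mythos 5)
Your proposal is correct and follows essentially the same route as the paper: identify $T_{X^f}$ inside the $(+1)$-eigenspace of $f_*$, use the anti-commutation $f_*J=-Jf_*$ to place $J(T_{X^f})$ in the $(-1)$-eigenspace, deduce trivial intersection from distinctness of eigenvalues, and finish with the dimension count $n+n=2n$. The only (harmless) difference is that you argue pointwise and use only the containment $T_pX^f\subseteq\ker(f_{*,p}-\operatorname{Id})$, which is all the argument needs, whereas the paper states this as an equality at the bundle level.
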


\begin{proof}
Assume that $X^f$ is a submanifold of real dimension $n.$ By the definition of fixed point set, \[T_{X^f}=\{\zeta \in T_X \mid f_* \zeta=\zeta\}.\] Since $f$ is anti-holomorphic, for any $\zeta' \in T_{X^f},$ $f_* J\zeta'=-J\zeta'.$ Thus,

\[J(T_{X^f})=\{\zeta \in T_X \mid f_* \zeta=-\zeta\}.\] Indeed, the above observations confirm that $f_*|_{T_{X^f}}=\operatorname{Id}_{T_{X^f}},$ and $f_*|_{J(T_{X^f})}=-\operatorname{Id}_{J(T_{X^f})}.$ It is clear that these subbundles of $T_X$ have trivial intersection, i.e.\ $T_{X^f} \cap J(T_{X^f})=\{0\}.$ Now, note that \[\rank_{\mathbb{R}} \big({T_{X^f} \oplus J(T_{X^f})}\big)=2 \rank_{\mathbb{R}} \big({T_{X^f}}\big)=2n.\] Therefore, ${T_X}|_{X^f} \simeq T_{X^f} \oplus J(T_{X^f}),$ and so $X^f$ is a totally real submanifold of the complex manifold $(X,J).$ 
\end{proof}

\begin{lemma}\label{Kd}
Let $(X,g,J)$ be a K{\"a}hler manifold, and $\nabla$ be the Levi-Civita connection. The Riemann curvature endomorphism satisfies 

\[R^{\nabla}(\zeta,\eta)(J\rho)=J R^{\nabla} (\zeta,\eta)\rho,\] for any vector fields $\zeta,\eta,\rho \in \mathfrak{X}(X).$
\end{lemma}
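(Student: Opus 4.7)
The proof will rest on the single fundamental fact that on a K\"ahler manifold the complex structure is parallel with respect to the Levi-Civita connection, i.e.\ $\nabla J = 0$. Equivalently, for all vector fields $\zeta, \rho \in \mathfrak{X}(X)$, one has $\nabla_\zeta (J\rho) = J(\nabla_\zeta \rho)$. This is one of the standard equivalent characterizations of the K\"ahler condition (it follows from $d\omega = 0$ together with the fact that $g$ is Hermitian and $\nabla$ is torsion-free), and I would either cite it from a reference such as \cite{cg} or include a one-line reminder.

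Granted this, the lemma is a direct computation. First I would write out the curvature endomorphism according to its definition
\[
R^{\nabla}(\zeta,\eta)(J\rho) = \nabla_\zeta \nabla_\eta (J\rho) - \nabla_\eta \nabla_\zeta (J\rho) - \nabla_{[\zeta,\eta]}(J\rho).
\]
Then I would apply the identity $\nabla_\bullet(J\,\cdot\,) = J(\nabla_\bullet\,\cdot\,)$ twice to each of the first two terms, and once to the third, yielding
\[
R^{\nabla}(\zeta,\eta)(J\rho) = J\nabla_\zeta \nabla_\eta \rho - J\nabla_\eta \nabla_\zeta \rho - J\nabla_{[\zeta,\eta]}\rho = J\,R^{\nabla}(\zeta,\eta)\rho,
\]
which is exactly the claimed identity.

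There is no real obstacle here beyond invoking $\nabla J = 0$; the rest is a mechanical unwinding of the definition of the curvature endomorphism together with $\mathbb{R}$-linearity of $\nabla_\zeta$ in its argument. If I wanted to be self-contained, the only step that would require actual work is justifying $\nabla J = 0$, which in turn uses that $\nabla$ is the unique torsion-free metric connection and that $d\omega = 0$; but given the level of the paper and the reference to \cite{cg} at the start of Section \ref{2B}, it seems appropriate to take this as known and present the proof as a short two- or three-line calculation.
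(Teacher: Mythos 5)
Your proposal is correct and follows essentially the same route as the paper: both invoke $\nabla J=0$ (equivalently $\nabla_{\zeta}(J\rho)=J(\nabla_{\zeta}\rho)$) and then push $J$ through each of the three terms in the definition of $R^{\nabla}(\zeta,\eta)(J\rho)$.
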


\begin{proof}
Since $J$ is K{\"a}hler, $(\nabla_{\zeta} J)(J\eta)=0,$ and so $\nabla_{\zeta} J\eta = J(\nabla_{\zeta} \eta).$ Thus, 

\begin{equation*}
\begin{split}
R^{\nabla}(\zeta,\eta)(J\rho)&=\nabla_{\zeta} \nabla_{\eta} J\rho-\nabla_{\eta} \nabla_{\zeta} J\rho -\nabla_{[\zeta,\eta]} J\rho\\
&=\nabla_{\zeta} \big(J(\nabla_{\eta} \rho)\big)-\nabla_{\eta} \big(J(\nabla_{\zeta} \rho)\big)-J\big(\nabla_{[\zeta,\eta]} \rho\big)\\
&=J \big(\nabla_{\zeta} \nabla_{\eta} \rho-\nabla_{\eta} \nabla_{\zeta} \rho -\nabla_{[\zeta,\eta]} \rho \big)\\
&= J R^{\nabla} (\zeta,\eta)\rho.
\end{split}
\end{equation*}
\end{proof}

\begin{lemma}\label{chi}
Let $(X,J)$ be a complex manifold and $f:(X,J) \rightarrow (X,J)$ be a bi-anti-holomorphic map. If $\omega \in 2 \pi c_1(X),$ then $-f^* \omega \in 2 \pi c_1(X).$
\end{lemma}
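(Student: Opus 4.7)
The plan is to deduce the statement from the sharper cohomological identity $f^{*}c_1(X) = -c_1(X)$ in $H^2(X;\mathbb{R})$. Once this is known, pulling back the hypothesis $[\omega] = 2\pi c_1(X)$ gives $[f^{*}\omega] = -2\pi c_1(X)$, hence $[-f^{*}\omega] = 2\pi c_1(X)$, which is the desired conclusion.

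To establish $f^{*}c_1(X) = -c_1(X)$, I would exploit the fact that the Ricci form of any K\"ahler form on $X$ represents $2\pi c_1(X)$. The first step is to observe that $-f^{*}\omega$ is itself a K\"ahler form on $X$: $f^{*}$ preserves real $(1,1)$-forms by Lemma \ref{L1}, closedness is automatic, and the positivity
\[
(-f^{*}\omega)(\zeta, J\zeta) \;=\; -\omega(f_{*}\zeta, f_{*}J\zeta) \;=\; \omega(f_{*}\zeta, Jf_{*}\zeta) \;>\; 0
\]
follows from $f_{*}J = -Jf_{*}$ together with the fact that $f$ is a diffeomorphism. Consequently $[Ric(-f^{*}\omega)] = 2\pi c_1(X)$.

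The second step is to show that $Ric(-f^{*}\omega) = -f^{*}Ric(\omega)$. Working in local holomorphic coordinates, the explicit expressions for $f^{*}dz^{j}$ and $f^{*}d\bar{z}^{k}$ established in the proof of Lemma \ref{L1} turn $-f^{*}\omega$ into $ih_{n\bar{m}}\,dz^{n}\wedge d\bar{z}^{m}$ with $\det h = (\det g_{j\bar{k}})\circ f \cdot |\det(\partial f_{j}/\partial \bar{z}^{m})|^{2}$. Because $f$ is anti-holomorphic, $\det(\partial f_{j}/\partial \bar{z}^{m})$ is anti-holomorphic and its conjugate is holomorphic, so both are annihilated by $\partial\bar{\partial}$ after taking $\log$. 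Using $Ric(\eta) = -i\partial\bar{\partial}\log\det(\eta_{j\bar{k}})$, what survives is $-i\partial\bar{\partial}(f^{*}\log\det g_{j\bar{k}})$. The commutation relation $\partial\bar{\partial}f^{*} = -f^{*}\partial\bar{\partial}$, which is an immediate consequence of Lemma \ref{L1}, then identifies this with $-f^{*}Ric(\omega)$. Passing to cohomology, $2\pi c_1(X) = -f^{*}[Ric(\omega)] = -2\pi f^{*}c_1(X)$, which yields the desired identity.

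The main obstacle is verifying that the Jacobian-determinant terms really drop out under $\partial\bar{\partial}$; this is precisely the place where bi-anti-holomorphicity (rather than merely smoothness) of $f$ is essential, since it forces $\partial f_{j}/\partial \bar{z}^{m}$ to be anti-holomorphic in $z$. A more conceptual alternative would be to regard $f$ as a holomorphic isomorphism $(X,-J)\to (X,J)$ and invoke functoriality of Chern classes together with $c_1(X,-J) = -c_1(X,J)$, but the Ricci-form route fits better with the framework already developed via Lemma \ref{L1}.
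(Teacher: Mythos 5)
Your proposal is correct, but it takes a genuinely different route from the paper. The paper's proof is a three-line cohomological argument: writing $c_1(X)=c_1(J)$, it uses naturality of Chern classes to get $f^*(2\pi c_1(J))=2\pi c_1(f^*J)$, the bi-anti-holomorphy identity $f^*J=-J$, and the conjugate-bundle fact $c_1(-J)=-c_1(J)$ (Milnor--Stasheff, Lemma 14.9) to conclude $[-f^*\omega]=[\omega]$ directly. This is exactly the ``more conceptual alternative'' you mention and set aside in your last sentence. Your route instead establishes the key identity $f^*c_1(X)=-c_1(X)$ analytically, by checking that $-f^*\omega$ is K\"ahler, computing $\det h=(\det g_{j\bar k})\circ f\cdot|\det(\partial f_j/\partial\bar z^m)|^2$, killing the Jacobian factors under $\partial\bar\partial$ (legitimate: $\det(\partial\bar f_j/\partial z^m)$ is holomorphic and nonvanishing since $f$ is a diffeomorphism), and using $\partial\bar\partial f^*=-f^*\partial\bar\partial$ from Lemma \ref{L1}; all of these steps are sound. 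Two remarks on the trade-off. First, your Ricci computation $Ric(-f^*\omega)=-f^*Ric(\omega)$ is essentially the content of the paper's Lemma \ref{zch}, so in the paper's architecture you are re-proving a neighboring lemma rather than using the cheaper topological fact; conversely, your argument delivers that stronger pointwise identity as a by-product rather than only the cohomological one. Second, your argument requires $\omega$ to actually be a positive $(1,1)$-form (you need a Ricci form to represent $2\pi c_1(X)$), whereas the paper's proof only uses that $\omega$ represents the class $2\pi c_1(X)$ on a complex manifold; in every application in the paper $\omega$ is a K\"ahler--Einstein form, so this loss of generality is harmless, but it is worth noting that the statement as written does not assume positivity.
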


\begin{proof}
Since we are assuming that our choice of manifold $X$ is fixed, the first Chern class of $X$ depends on the complex structure only. Thus, we write $c_1(J)$ in place of $c_1(X):=c_1(T_X).$ 
Now, simply note that 

\begin{equation*}
\begin{split}
[-f^* \omega] & = -f^* [\omega] \\
&=-f^* (2 \pi c_1(J))\\
&=-2 \pi c_1(f^*J) \\
&=-2 \pi c_1(-J) \\
&=[\omega].
\end{split}
\end{equation*}
Here we have used the naturality of Chern classes to go from the second to the third equality, and the identity $c_k(-J)=(-1)^k c_1(J)$ (see Lemma 14.9 in \cite{c1}) to deduce the last line.
\end{proof}

\begin{lemma}\label{zch}
Let $(X,J)$ be a complex manifold, $\omega$ be a K{\"a}hler-Einstein form on $X,$ and $f:(X,J) \rightarrow (X,J)$ be a bi-anti-holomorphic map. Then $-f^* \omega$ is a K{\"a}hler-Einstein form on $X.$
\end{lemma}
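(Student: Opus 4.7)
The plan is to set $\omega' := -f^*\omega$, show that $\omega'$ is a K\"ahler form on $(X,J)$ whose associated Riemannian metric equals the pulled-back metric $f^*g$, and then deduce the Einstein condition for $\omega'$ by invoking naturality of the Ricci tensor under the diffeomorphism $f$.

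First I would check that $\omega'$ is a K\"ahler form on $(X,J)$. Being a pullback of a real closed form, $\omega'$ is itself real and closed, so what remains is its type with respect to $J$ and its positivity. The anti-holomorphy of $f$ gives $f_* J = -J f_*$, and a direct computation yields
\[(f^*\omega)(J\zeta, J\eta) = \omega(-Jf_*\zeta, -Jf_*\eta) = \omega(f_*\zeta, f_*\eta) = (f^*\omega)(\zeta,\eta),\]
so $\omega'$ is $J$-invariant and hence of type $(1,1)$ with respect to $J$ (not merely with respect to the pulled-back structure $f^*J = -J$). Using the paper's convention $g(\cdot,\cdot)=\omega(\cdot,J\cdot)$, the associated symmetric bilinear form of $\omega'$ computes to
\[\omega'(\zeta, J\eta) = -\omega(f_*\zeta, -Jf_*\eta) = g(f_*\zeta, f_*\eta) = (f^*g)(\zeta,\eta),\]
which is a genuine Riemannian metric because $f$ and $f^{-1}$ are both smooth by virtue of bi-anti-holomorphy. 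Hence $\omega'$ is K\"ahler with associated metric $g' := f^*g$.

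For the Einstein condition, I would view $f:(X, f^*g) \to (X, g)$ as an isometry and invoke naturality of the Ricci tensor under isometries to obtain $Ric_{f^*g} = f^*Ric_g = f^*(\lambda g) = \lambda f^*g$, so $f^*g$ is Einstein with the same constant $\lambda$ as $g$. Passing back to the K\"ahler form via $Ric(\omega') = Ric_{g'}(J\cdot,\cdot) = \lambda g'(J\cdot,\cdot) = \lambda \omega'$, we conclude that $\omega'$ is K\"ahler-Einstein.

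The main subtlety is the sign bookkeeping in the first step: the minus sign in the definition of $\omega'$ and the type-reversing nature of $f^*$ must conspire to produce a positive $(1,1)$-form with respect to the original $J$, and not with respect to $f^*J = -J$. Once this compatibility is established, the Einstein condition is essentially a restatement of diffeomorphism invariance of the Ricci tensor, and no further input (such as uniqueness of K\"ahler-Einstein metrics from Theorem \ref{uniq}) is required.
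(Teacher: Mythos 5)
Your proof is correct, and while it opens the same way as the paper's (both arguments first compute $(f^*g)(\zeta,\eta)=-\,(f^*\omega)(\zeta,J\eta)$ to identify the metric associated to $\omega':=-f^*\omega$ as $f^*g$), the way you close the argument is genuinely different. The paper stays inside the complex-analytic formalism: it uses the local expression $Ric(\omega)=-i\partial\bar\partial\log\det(g)$ together with Lemma \ref{L1} (the commutation relations $\partial f^*=f^*\bar\partial$ and $\bar\partial f^*=f^*\partial$) to get $f^*Ric(\omega)=Ric(f^*\omega)$ and hence $Ric(-f^*\omega)=\lambda(-f^*\omega)$. You instead treat $f:(X,f^*g)\to(X,g)$ as an isometry and invoke diffeomorphism-invariance of the Riemannian Ricci tensor, $Ric_{f^*g}=f^*Ric_g=\lambda f^*g$, then translate back to the Ricci form via the paper's definition $Ric(\omega')=Ric_{g'}(J\cdot,\cdot)$; this is legitimate because you have verified that $\omega'$ is a closed positive $(1,1)$-form for the \emph{original} $J$, so $g'=f^*g$ is genuinely K\"ahler with respect to $J$ and the equivalence $Ric_{g'}=\lambda g'\iff Ric(\omega')=\lambda\omega'$ applies. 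Your route avoids Lemma \ref{L1} and all local coordinate computations, at the cost of having to be explicit about the type and positivity of $\omega'$ (which you are --- indeed your verification of $J$-invariance is more careful than the paper's, which only records the type implicitly through the local expression $f^*\omega=i(-f^*g_{i\bar j})\,dz^i\wedge d\bar z^j$). The paper's route, by contrast, reuses machinery (Lemma \ref{L1}) already developed for other purposes and keeps the whole argument in the $\partial\bar\partial$-calculus.
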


\begin{proof}
Let us write $g$ for the Riemannian metric associated to $\omega.$ Observe that 

\begin{equation*}
\begin{split}
f^* g(Y,Z) & = g(f_{*} Y, f_{*} Z) \\
&=\omega(f_{*} Y,  J f_{*}Z)\\
&=-\omega(f_{*} Y,  f_{*} J Z)\\\
&=-f^* \omega(Y, JZ).
\end{split}
\end{equation*}

Since locally we have that $\omega=i g_{i \bar{j}} dz^i \wedge d \bar{z}^j,$ the anti-holomorphicity of $f$ implies that $f^* \omega = i (f^* g_{i \bar{j}}) d \bar{z}^j \wedge dz^i = i (-f^* g_{i \bar{j}}) dz^i \wedge d \bar{z}^j.$ Using Lemma \ref{L1}, we compute

\[f^* Ric(\omega) = f^*(-i \partial \bar{\partial} \log{\det(g)}) =-i \partial \bar{\partial} \log{\det(-f^*g)} = Ric(f^* \omega).\]

Assuming that $\lambda$ is the Einstein constant of $\omega,$ it follows that 

\begin{equation*}
\begin{split}
Ric(-f^* \omega)&=-f^* Ric(\omega)\\
&=\lambda (-f^* \omega).
\end{split}
\end{equation*}

\end{proof}

\begin{lemma}\label{ric}
Let $n\geq 1,$ and $(X,g,J)$ be a K{\"a}hler manifold with $\dim_{\mathbb{R}} {X}=2n.$ Let $(Y,g|_Y)$ be a totally real totally geodesic submanifold, and $(e_1,\dots,e_n,Je_1,\dots,Je_n)$ be a local orthonormal frame of ${T_X}|_Y$ such that $(e_1,\dots,e_n)$ is a local orthonormal frame of $T_Y.$ For any $\zeta,\eta \in \mathfrak{X}(Y),$

\[Ric_{g|_Y} (\zeta,\eta)=Ric_g(\zeta,\eta)-\sum_{1\leq \alpha \leq n} Rm^X(Je_{\alpha},\zeta,\eta,Je_{\alpha}).\]
\end{lemma}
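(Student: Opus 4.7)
The plan is to reduce the identity to a direct computation of traces, using the totally geodesic hypothesis to replace the intrinsic curvature $Rm^Y$ with the restriction $Rm^X|_Y$, and then comparing the two traces on $T_Y$ versus $T_X|_Y$.

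First I would verify that the local frame in the statement makes sense: because $Y$ is totally real, ${T_X}|_Y \simeq T_Y \oplus J(T_Y)$; because $g$ is Kähler (so $J$ is a $g$-isometry on $T_X$), if $(e_1,\dots,e_n)$ is orthonormal in $T_Y$, then $(Je_1,\dots,Je_n)$ is orthonormal in $J(T_Y)$ and orthogonal to $T_Y$, so $(e_1,\dots,e_n,Je_1,\dots,Je_n)$ is an orthonormal frame of ${T_X}|_Y$. Next, expand the intrinsic Ricci tensor as a trace against the $T_Y$-frame,
\[
Ric_{g|_Y}(\zeta,\eta)=\sum_{\alpha=1}^n Rm^Y(e_\alpha,\zeta,\eta,e_\alpha).
\]
Since $(Y,g|_Y)$ is totally geodesic in $(X,g)$, the observation immediately following Proposition \ref{tgs} gives $Rm^Y(\zeta,\eta,\rho,\upsilon)=Rm^X(\zeta,\eta,\rho,\upsilon)$ for all $\zeta,\eta,\rho,\upsilon\in\mathfrak{X}(Y)$, so
\[
Ric_{g|_Y}(\zeta,\eta)=\sum_{\alpha=1}^n Rm^X(e_\alpha,\zeta,\eta,e_\alpha).
\]

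On the other hand, I would compute the ambient Ricci tensor as a trace against the full orthonormal frame of ${T_X}|_Y$ constructed above:
\[
Ric_g(\zeta,\eta)=\sum_{\alpha=1}^n Rm^X(e_\alpha,\zeta,\eta,e_\alpha)+\sum_{\alpha=1}^n Rm^X(Je_\alpha,\zeta,\eta,Je_\alpha).
\]
Subtracting the previous identity from this one yields
\[
Ric_g(\zeta,\eta)-Ric_{g|_Y}(\zeta,\eta)=\sum_{\alpha=1}^n Rm^X(Je_\alpha,\zeta,\eta,Je_\alpha),
\]
which is exactly the claim after rearrangement.

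There is essentially no serious obstacle: the only point of attention is making the frame argument rigorous (totally real plus $J$-isometry gives a true orthonormal complement), together with a clean application of the Gauss-type identity $Rm^Y=Rm^X|_Y$ for totally geodesic submanifolds. Lemma \ref{Kd} is not strictly needed here, though it would enter if one wanted to further simplify the correction term; for the stated formula the two trace computations suffice.
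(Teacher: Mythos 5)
Your proposal is correct and follows essentially the same route as the paper: write $Ric_{g|_Y}$ as a trace over the frame of $T_Y$, use the totally geodesic hypothesis to replace $Rm^Y$ by $Rm^X$, expand $Ric_g$ as a trace over the full frame of ${T_X}|_Y$, and subtract. The only difference is that you also spell out why $(e_1,\dots,e_n,Je_1,\dots,Je_n)$ is a genuine orthonormal frame, a point the paper leaves implicit.
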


\begin{proof}
On the one hand, we have that since $(Y,g|_Y)$ is totally geodsic \[Ric_{g|_Y} (\zeta,\eta):=\sum_{1\leq \alpha \leq n} Rm^Y(e_{\alpha},\zeta,\eta,e_{\alpha})=\sum_{1\leq \alpha \leq n} Rm^X(e_{\alpha},\zeta,\eta,e_{\alpha}).\]

On the other hand, \[Ric_g(\zeta,\eta)=\sum_{1\leq \alpha \leq n} [Rm^X(e_{\alpha},\zeta,\eta,e_{\alpha})+Rm^X(Je_{\alpha},\zeta,\eta,Je_{\alpha})].\] Thus, \[Ric_{g|_Y} (\zeta,\eta)=Ric_g(\zeta,\eta)-\sum_{1\leq \alpha \leq n} Rm^X(Je_{\alpha},\zeta,\eta,Je_{\alpha}).\]
\end{proof}

\begin{lemma}\label{iffKE}
Let $n\geq 1,$ $(X,g,J)$ be a K{\"a}hler-Einstein manifold with $\dim_{\mathbb{R}} {X}=2n,$ $(Y,g|_Y)$ be a totally real totally geodesic submanifold, and $(e_1,\dots,e_n,Je_1,\dots,Je_n)$ be a frame as in Lemma \ref{ric}. Then, $g|_Y$ is Einstein if and only if there is a constant $C$ such that for all $\zeta,\eta \in \mathfrak{X}(Y),$ \[\sum_{1\leq \alpha \leq n} Rm^X(Je_{\alpha},\zeta,\eta,Je_{\alpha})=Cg(\zeta,\eta).\] 
\end{lemma}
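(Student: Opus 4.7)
The statement is essentially a direct corollary of Lemma \ref{ric} combined with the Kähler-Einstein hypothesis on the ambient space, so the plan is to carefully chase definitions rather than introduce any new geometric input. The key observation is that the curvature sum in question appears as the difference between $Ric_g$ and $Ric_{g|_Y}$, so the Einstein property on $Y$ is equivalent to that difference being a constant multiple of the metric.

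First, I would invoke the ambient Kähler-Einstein condition to write $Ric_g = \lambda g$ for some real $\lambda$. Substituting this into the conclusion of Lemma \ref{ric}, I get
\[
Ric_{g|_Y}(\zeta,\eta) \;=\; \lambda\, g|_Y(\zeta,\eta) \;-\; \sum_{1 \leq \alpha \leq n} Rm^X(Je_\alpha, \zeta, \eta, Je_\alpha),
\]
for all $\zeta, \eta \in \mathfrak{X}(Y)$, where I use that $g(\zeta,\eta) = g|_Y(\zeta,\eta)$ when $\zeta,\eta$ are tangent to $Y$.

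From here, both directions are immediate. For the forward direction, assume $g|_Y$ is Einstein, so $Ric_{g|_Y} = \mu\, g|_Y$ for some constant $\mu$. Rearranging the displayed identity yields $\sum_\alpha Rm^X(Je_\alpha,\zeta,\eta,Je_\alpha) = (\lambda - \mu)\, g(\zeta,\eta)$, which is the desired equation with $C = \lambda - \mu$. For the converse, assume such a constant $C$ exists. Then the displayed identity gives $Ric_{g|_Y}(\zeta,\eta) = (\lambda - C)\, g|_Y(\zeta,\eta)$, so $g|_Y$ is Einstein with Einstein constant $\lambda - C$.

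The proof presents no real obstacle; the only mildly subtle point is to note that the quantity in Lemma \ref{ric} involves $g$ evaluated on vectors tangent to $Y$, hence equals $g|_Y$, so the constant condition on the curvature sum is genuinely an Einstein-type condition for $g|_Y$ (as opposed to some mixed statement). The value of the result lies entirely in isolating the one piece of ambient curvature data responsible for the Einstein property of the real locus, which will then be packaged as a trace operator in the proof of Theorem \ref{maint}.
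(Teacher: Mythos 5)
Your proof is correct and follows exactly the paper's own argument: substitute the ambient Einstein condition $Ric_g=\lambda g$ into Lemma \ref{ric} and rearrange, with the constants related by $C=\lambda-\kappa$. Nothing further is needed.
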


\begin{proof}
Assume that $Ric_g=\lambda g.$ By Lemma \ref{ric}, $Ric_{g|_Y}=\kappa g|_Y,$ for some $\kappa \in \mathbb{R},$ if and only if for all $\zeta,\eta \in \mathfrak{X}(Y),$ \[\sum_{1\leq \alpha \leq n} Rm^X(Je_{\alpha},\zeta,\eta,Je_{\alpha})=Cg(\zeta,\eta),\] where $C=\lambda-\kappa.$ 
\end{proof}

The notion of mixed sectional curvature was introduced in \cite{AL} (see the definition on page $662$ \cite{AL}). In the setting of Lemma \ref{iffKE}, assume that the mixed sectional curvature of $(X,g,J)$ with respect to $Y$ is constant and equal to $c,$ i.e.\ $Rm^X (\zeta,J\eta,J\eta,\zeta)=c,$ for any unit vector fields $\zeta,\eta \in \mathfrak{X}(Y).$ From the proof of Theorem 3.2 of \cite{AL}, it follows that $Rm^X (Je_{\alpha},\zeta,\eta,Je_{\alpha})=cg(\zeta,\eta),$ for all $\zeta,\eta \in \mathfrak{X}(Y).$ Thus, $g|_Y$ is Einstein because \[\sum_{1 \leq \alpha \leq n} Rm^X (Je_{\alpha},\zeta,\eta,Je_{\alpha})=nc g(\zeta,\eta).\]

\begin{lemma}\label{TN}
Let $n\geq 1,$ $(X,J)$ be a complex manifold of real dimension $2n,$ $f:(X,J) \to (X,J)$ be an anti-holomorphic mapping, and $X^f$ be its fixed point set. Assume that $X^f$ is an $n$-dimensional submanifold of $X.$ Let $\zeta \in \mathfrak{X}(X).$ Denote by $\zeta^{\top}$ and $\zeta^{\perp}$ the tangent, respectively normal parts of $\zeta.$ We have the following facts : 

\begin{enumerate}
\item $\zeta^{\top}=\frac{f_* \zeta + \zeta}{2}$ and $\zeta^{\perp}=\frac{\zeta-f_* \zeta}{2},$ and 

\item $J\zeta^{\top}=(J\zeta)^{\perp}$ and $J\zeta^{\perp}=(J\zeta)^{\top}.$
\end{enumerate}
\end{lemma}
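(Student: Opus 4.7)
The plan is to reduce everything to the structural consequences of Lemma \ref{totrk}. That lemma tells us that along $X^f$ we have the splitting ${T_X}|_{X^f}\simeq T_{X^f}\oplus J(T_{X^f})$, that $f_*$ acts as the identity on $T_{X^f}$, and as $-\operatorname{Id}$ on $J(T_{X^f})$. In particular, since $T_{X^f}$ and $J(T_{X^f})$ meet trivially and span ${T_X}|_{X^f}$, the normal bundle of $X^f$ inside $X$ is exactly $J(T_{X^f})$. So the $\top$ and $\perp$ decomposition of a vector field $\zeta\in\mathfrak{X}(X)$, restricted to $X^f$, is precisely its decomposition as a sum of an element of $T_{X^f}$ and an element of $J(T_{X^f})$.

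For part (1), I would write $\zeta=\zeta^{\top}+\zeta^{\perp}$ at a point of $X^f$ and apply $f_*$. By Lemma \ref{totrk}, $f_*\zeta^{\top}=\zeta^{\top}$ and $f_*\zeta^{\perp}=-\zeta^{\perp}$, hence
\[
f_*\zeta=\zeta^{\top}-\zeta^{\perp}.
\]
Adding and subtracting this with $\zeta=\zeta^{\top}+\zeta^{\perp}$ immediately gives
\[
\zeta^{\top}=\tfrac{f_*\zeta+\zeta}{2},\qquad \zeta^{\perp}=\tfrac{\zeta-f_*\zeta}{2}.
\]

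For part (2), I would feed the formulas from part (1) into the anti-holomorphicity relation $f_*J=-Jf_*$. Applying part (1) to $J\zeta$ in place of $\zeta$, and using $f_*(J\zeta)=-J(f_*\zeta)$, yields
\[
(J\zeta)^{\perp}=\tfrac{J\zeta-f_*J\zeta}{2}=\tfrac{J\zeta+Jf_*\zeta}{2}=J\Big(\tfrac{\zeta+f_*\zeta}{2}\Big)=J\zeta^{\top},
\]
and analogously $(J\zeta)^{\top}=J\zeta^{\perp}$. Alternatively, one can argue directly from the splitting: $J\zeta^{\top}\in J(T_{X^f})$ is normal and $J\zeta^{\perp}\in J(J(T_{X^f}))=T_{X^f}$ is tangent, so by uniqueness of the decomposition $J\zeta=J\zeta^{\top}+J\zeta^{\perp}$ the two pieces must be $(J\zeta)^{\perp}$ and $(J\zeta)^{\top}$ respectively.

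There is really no serious obstacle here; the only subtlety worth flagging is that the decomposition and the formulas only make intrinsic sense pointwise along $X^f$, since $\zeta^{\top}$ and $\zeta^{\perp}$ are defined only there. The proof is essentially algebra in the fibers of ${T_X}|_{X^f}$ once the eigenspace description of $f_*$ from Lemma \ref{totrk} is in hand.
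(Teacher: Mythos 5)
Your proof is correct and follows essentially the same route as the paper: both use the splitting ${T_X}|_{X^f}\simeq T_{X^f}\oplus J(T_{X^f})$ and the eigenvalue description of $f_*$ from Lemma \ref{totrk}, then derive part (1) by solving the linear system and part (2) from part (1) together with anti-holomorphicity. You simply spell out the algebra that the paper's terser proof leaves implicit.
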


\begin{proof}
Since $X^f$ is totally real, we can write $\zeta=\zeta_1+J\zeta_2,$ for some uniquely given $\zeta_1,\zeta_2 \in \mathfrak{X}(X^f).$ Indeed, $\zeta^{\top}=\zeta_1$ and $\zeta^{\perp}=J\zeta_2.$ The first point follows from Lemma \ref{totrk} and the anti-holomorphic character of $f.$ The second point follows from the first one and anti-holomorphicity.
\end{proof}

\begin{proposition}\label{mainp}
Let $n\geq 1,$ $(X,g,J)$ be a K{\"a}hler-Einstein manifold with $\dim_{\mathbb{R}} {X}=2n,$ $f:(X,J) \to (X,J)$ be an anti-holomorphic mapping, and $X^f$ be its fixed point set. Assume that $X^f$ is an $n$-dimensional submanifold of $X.$ Let $(e_1,\dots,e_n,Je_1,\dots,Je_n)$ be a local orthonormal frame of ${T_X}|_{X^f}$ such that $(e_1,\dots,e_n)$ is such a frame of $T_{X^f}.$ Let $\zeta, \eta \in \mathfrak{X}(X^f).$ Then, \[\sum_{1\leq \alpha \leq n} Rm^X(Je_{\alpha},\zeta,\eta,Je_{\alpha})=Cg(\zeta,\eta)\] for some $C\in \mathbb{R}$ if and only if \[\sum_{1\leq \alpha \leq n} J\big[R^{\nabla}(Je_{\alpha},\zeta)e_{\alpha}\big]^{\perp}=-C\zeta.\]
\end{proposition}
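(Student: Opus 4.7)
The plan is to rewrite the left-hand side as an inner product, strip off the inner product against $\eta$ by nondegeneracy, and then see the normal part of $R^\nabla(Je_\alpha,\zeta)e_\alpha$ appear for free using Lemma \ref{TN}.

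First I would unpack the curvature $4$-tensor as $Rm^X(Je_\alpha,\zeta,\eta,Je_\alpha)=g(R^\nabla(Je_\alpha,\zeta)\eta,Je_\alpha)$, use antisymmetry in the last pair to trade $\eta$ for $Je_\alpha$ inside $R^\nabla$, and then invoke Lemma \ref{Kd} to pull $J$ out of $R^\nabla(Je_\alpha,\zeta)Je_\alpha=JR^\nabla(Je_\alpha,\zeta)e_\alpha$. This gives the pointwise identity
\[Rm^X(Je_\alpha,\zeta,\eta,Je_\alpha)=-g\bigl(JR^\nabla(Je_\alpha,\zeta)e_\alpha,\eta\bigr),\]
so that, summing over $\alpha$, the hypothesis $\sum_\alpha Rm^X(Je_\alpha,\zeta,\eta,Je_\alpha)=Cg(\zeta,\eta)$ is equivalent to
\[g\Bigl(\sum_\alpha JR^\nabla(Je_\alpha,\zeta)e_\alpha+C\zeta,\ \eta\Bigr)=0\qquad\text{for every }\eta\in\mathfrak{X}(X^f).\]

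Second, writing $W_\alpha:=R^\nabla(Je_\alpha,\zeta)e_\alpha$ and decomposing $W_\alpha=W_\alpha^{\top}+W_\alpha^{\perp}$ via Lemma \ref{TN}, the second assertion of that lemma says $J$ swaps tangential and normal parts: $JW_\alpha^{\top}\in J(T_{X^f})$ while $JW_\alpha^{\perp}\in T_{X^f}$. Since $\eta\in T_{X^f}$ and the decomposition $T_X|_{X^f}=T_{X^f}\oplus J(T_{X^f})$ is $g$-orthogonal, the pairing against $\eta$ kills the $JW_\alpha^{\top}$ contribution and leaves $g(JW_\alpha,\eta)=g(JW_\alpha^{\perp},\eta)$. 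Substituting, the condition becomes
\[g\Bigl(\sum_\alpha JW_\alpha^{\perp}+C\zeta,\ \eta\Bigr)=0\qquad\text{for every }\eta\in\mathfrak{X}(X^f).\]

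Third, both $\sum_\alpha JW_\alpha^{\perp}$ and $C\zeta$ now lie in $T_{X^f}$, so nondegeneracy of $g|_{T_{X^f}}$ upgrades this to the pointwise equality $\sum_\alpha JW_\alpha^{\perp}=-C\zeta$, which is precisely $\sum_{1\leq\alpha\leq n}J\bigl[R^\nabla(Je_\alpha,\zeta)e_\alpha\bigr]^{\perp}=-C\zeta$. Running the argument in reverse establishes the converse direction without modification.

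The only real subtlety will be bookkeeping: getting the sign from the antisymmetry of $Rm$ in the last pair right, and remembering that the Kähler identity of Lemma \ref{Kd} is what makes $J$ commute through $R^\nabla$. Everything else is mechanical tangent/normal decomposition courtesy of Lemma \ref{TN}.
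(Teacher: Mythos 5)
Your proof is correct and follows essentially the same route as the paper: antisymmetry of $Rm^X$ in the last two slots, Lemma \ref{Kd} to commute $J$ through $R^{\nabla}$, the $g$-orthogonal tangent/normal splitting from Lemma \ref{TN} to isolate $J[W_\alpha]^{\perp}$, and nondegeneracy of $g|_{T_{X^f}}$ to remove the pairing against $\eta$. The only (immaterial) difference is the order of operations: the paper projects onto the tangential part first and then applies Lemmas \ref{Kd} and \ref{TN}.2 to rewrite $[R^{\nabla}(Je_\alpha,\zeta)(Je_\alpha)]^{\top}$ as $J[R^{\nabla}(Je_\alpha,\zeta)e_\alpha]^{\perp}$, whereas you commute $J$ first and then project.
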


\begin{proof}
Observe that 

\begin{equation*}
\begin{split}
\sum_{1\leq \alpha \leq n} Rm^X(Je_{\alpha},\zeta,\eta,Je_{\alpha})&=\sum_{1\leq \alpha \leq n} -Rm^X(Je_{\alpha},\zeta,Je_{\alpha},\eta)\\
&=g\big(\sum_{1\leq \alpha \leq n}-\big[R^{\nabla}(Je_{\alpha},\zeta)(Je_{\alpha})\big]^{\top},\eta).
\end{split}
\end{equation*}

Thus, \[\sum_{1\leq \alpha \leq n} Rm^X(Je_{\alpha},\zeta,\eta,Je_{\alpha})=Cg(\zeta,\eta)\] iff 

\begin{equation}\label{ceq}
\begin{split}
\sum_{1\leq \alpha \leq n}\big[R^{\nabla}(Je_{\alpha},\zeta)(Je_{\alpha})\big]^{\top} +C\zeta=0.
\end{split}
\end{equation}

By Lemmas \ref{Kd} and \ref{TN}.2, \[\big[R^{\nabla}(Je_{\alpha},\zeta)(Je_{\alpha})\big]^{\top}=J\big[R^{\nabla}(Je_{\alpha},\zeta)e_{\alpha}\big]^{\perp}.\] Hence, equation \ref{ceq} can be expressed as \[\sum_{1\leq \alpha \leq n} J\big[R^{\nabla}(Je_{\alpha},\zeta)e_{\alpha}\big]^{\perp}=-C\zeta.\]
\end{proof}

\begin{definition}\label{maindef}
Let $(M,g)$ be a Riemannian manifold with Levi-Civita connection $\nabla,$ and curvature endomorphism $R^{\nabla}.$ Assume that $M$ carries a complex structure $J.$ Let $(N,g|_N)$ be a totally real $n$-dimensional Riemannian submanifold, and $(e_{\alpha})_{\alpha=1}^n$ be a local orthonormal frame of $T_N.$ Consider the map $\mathcal{R}^{g,J}_N : \mathfrak{X}(N)^3 \to \mathfrak{X}(N),$ given by \[\mathcal{R}^{g,J}_N (\zeta,\eta,\rho):=J\big[R^{\nabla}(J\zeta,\eta)\rho\big]^{\perp},\] for any $\zeta,\eta,\rho \in \mathfrak{X}(N).$ Define the trace $\operatorname{tr}_{13}(\mathcal{R}^{g,J}_N) : \mathfrak{X}(N) \to \mathfrak{X}(N)$ by \[\operatorname{tr}_{13}(\mathcal{R}^{g,J}_N)=\sum_{1\leq \alpha \leq n} J\big[R^{\nabla}(Je_{\alpha},\cdot)e_{\alpha}\big]^{\perp}.\]
\end{definition}

Of course, we have that \[\operatorname{tr}_{13}(\mathcal{R}^{g,J}_{X^f})=-C\operatorname{Id}_{\mathfrak{X}(N)}\] iff \[\operatorname{tr}_{13}(\mathcal{R}^{g,J}_N)\zeta=\sum_{1\leq \alpha \leq n} J\big[R^{\nabla}(Je_{\alpha},\zeta)e_{\alpha}\big]^{\perp}=-C\zeta,\] for all $\zeta \in \mathfrak{X}(N).$

\begin{theorem}\label{maint}
Let $n\geq 1,$ $(X,g,J)$ be a compact K{\"a}hler-Einstein manifold of real dimension $2n,$ and $\omega$ be the associated K{\"a}hler form. Let $f$ be a bi-anti-holomorphism of $(X,J),$ and denote its set of fixed points by $X^f.$ Assume that $X^f$ is an $n$-dimensional submanifold. If $c_1(X) < 0,$ then $(X^f,g|_{X^f})$ is Einstein if and only if there is a constant $C\in \mathbb{R}$ such that 
\begin{equation}\label{Einscondition}
\begin{split}
\operatorname{tr}_{13}(\mathcal{R}^{g,J}_{X^f})=-C\operatorname{Id}_{\mathfrak{X}(X^f)}.
\end{split}
\end{equation}
If $c_1(X)=0$ and $[-f^* \omega]=[\omega],$ then $(X^f,g|_{X^f})$ is Einstein if and only if condition \ref{Einscondition} holds true. If $c_1(X)>0$ and $f$ is involutive, there exists a holomorphism $a \in Aut_0(X)$ such that $a\circ f$ is an isometry, and for any such $a,$ if the fixed point set $X^{a\circ f}$ is an $n$-dimensional submanifold, then $(X^{a\circ f},g|_{X^{a\circ f}})$ is Einstein iff condition \ref{Einscondition} holds true. 
\end{theorem}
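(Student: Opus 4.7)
The unifying strategy is to reduce each case to showing that the relevant anti-holomorphic map is an isometry of $(X,g)$. Once this is established, Theorem \ref{totig} shows that its fixed point set is totally geodesic, and, combined with the $n$-dimensionality hypothesis, Lemma \ref{totrk} shows that it is also totally real. The biconditional in the theorem is then nothing but Lemma \ref{iffKE}, rewritten in terms of $\operatorname{tr}_{13}(\mathcal{R}^{g,J}_{X^f})$ via Proposition \ref{mainp} and Definition \ref{maindef}. So in each case the core task is to exhibit some anti-holomorphic map pulling $\omega$ back to $-\omega$, whereupon Lemma \ref{spg} delivers the isometry property for free.

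For $c_1(X)<0$, the form $-f^*\omega$ is K\"ahler-Einstein by Lemma \ref{zch}, and a Chern class calculation of exactly the same shape as Lemma \ref{chi} (the sign of $c_1$ flips, but the cancellation is identical) yields $[-f^*\omega]=[\omega]\in -2\pi c_1(X)$. The uniqueness in Theorem \ref{uniq}(1) then forces $-f^*\omega=\omega$, i.e.\ $f^*\omega=-\omega$, so Lemma \ref{spg} upgrades $f$ to an isometry of $g$. For $c_1(X)=0$, the same logic applies: $-f^*\omega$ is K\"ahler-Einstein by Lemma \ref{zch}, it sits in $[\omega]$ by the standing hypothesis, and the per-class uniqueness of Theorem \ref{uniq}(2) again yields $f^*\omega=-\omega$. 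The reduction in the previous paragraph then closes both cases.

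For $c_1(X)>0$, the form $-f^*\omega$ is K\"ahler-Einstein (Lemma \ref{zch}) and lies in $2\pi c_1(X)$ (Lemma \ref{chi}). Theorem \ref{uniq}(3) furnishes $a\in\Aut_0(X)$ with $a^*\omega=-f^*\omega$, establishing the existence of the promised automorphism. Using that $f$ is involutive, I would then verify
\[ (a\circ f)^*\omega \;=\; f^*(a^*\omega) \;=\; f^*(-f^*\omega) \;=\; -(f\circ f)^*\omega \;=\; -\omega, \]
so that $a\circ f$, being a composition of a holomorphic and an anti-holomorphic map, is anti-holomorphic, and by Lemma \ref{spg} is an isometry of $g$. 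For any $a\in\Aut_0(X)$ for which $a\circ f$ is an isometry and $X^{a\circ f}$ is an $n$-dimensional submanifold, the reduction of the first paragraph applies verbatim to $X^{a\circ f}$.

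The main obstacle is the positive Chern class case: the plain map $f$ is generally not itself an isometry, and the involutivity hypothesis is used precisely to collapse $(f\circ f)^*\omega$ to $\omega$, which is what allows the $a$ supplied by Theorem \ref{uniq}(3) to be combined with $f$ into a bona fide isometric symmetry of $g$. A secondary point to be explicit about is that the theorem asserts the biconditional for \emph{every} $a$ making $a\circ f$ an isometry (with $n$-dimensional fixed set), not only for the particular $a$ produced by uniqueness; but because the reduction in the first paragraph depends only on $a\circ f$ being an anti-holomorphic isometry with $n$-dimensional fixed locus, it applies uniformly to any such $a$.
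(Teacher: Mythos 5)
Your proposal is correct and follows essentially the same route as the paper: use uniqueness of the K\"ahler--Einstein metric (Theorem \ref{uniq}) together with Lemmas \ref{chi} and \ref{zch} to show that $f$ (or $a\circ f$ in the Fano case) pulls $\omega$ back to $-\omega$, hence is an isometry by Lemma \ref{spg}, and then combine Theorem \ref{totig}, Lemma \ref{totrk}, Lemma \ref{iffKE}, and Proposition \ref{mainp}. Your explicit cohomology-class check in the $c_1(X)<0$ case and your remark that the argument applies to \emph{any} $a$ making $a\circ f$ an isometry are slightly more careful than the paper's write-up, but the argument is the same.
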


\begin{proof}
We analyze the cases $c_1(X)<0,$ $c_1(X)=0,$ and $c_1(X)>0$ separately. 

\emph{Case $c_1(X)<0$}: The KE form $\omega$ on $X$ is unique (c.f.\ Theorem \ref{uniq}.1). So Lemma \ref{zch} implies that $-f^* \omega =\omega.$ By Lemma \ref{spg}, $f$ is an isometry of $(X,g).$ By Theorem \ref{totig} and Lemma \ref{totrk}, $(X^f,g|_{X^f})$ is a totally geodesic totally real submanifold of $(X,g).$ Let $(e_1,\dots,e_n,J_1,\dots,Je_n)$ be a local orthonormal frame of ${T_X}|_{X^f}$ s.t.\ $(e_1,\dots,e_n)$ is such a frame of $T_{X^f}.$ By Lemma \ref{iffKE} and Proposition \ref{mainp}, $g|_{X^f}$ is Einstein iff \[\sum_{1\leq \alpha \leq n} J\big[R^{\nabla}(Je_{\alpha},\zeta)e_{\alpha}\big]^{\perp}=-C\zeta,\] for all $\zeta$ and some real constant $C,$ i.e.\ iff \[\operatorname{tr}_{13}(\mathcal{R}^{g,J}_{X^f})=-C\operatorname{Id}_{\mathfrak{X}(X^f)}.\]

\emph{Case $c_1(X)=0$}: By Lemma \ref{zch}, $-f^*\omega$ is another KE form on $X.$ Assuming that $\omega$ and $-f^* \omega$ are in the same cohomology class, Theorem \ref{uniq}.2 implies that $-f^* \omega=\omega.$ In this case too, we directly conclude that $f$ is an isometry of $(X,g),$ and so $(X^f,g|_{X^f})$ is a totoally geodesic totally real submanifold. By Lemma \ref{iffKE} and Proposition \ref{mainp}, $g|_{X^f}$ is Einstein if and only if $\operatorname{tr}_{13}(\mathcal{R}^{g,J}_{X^f})=-C\operatorname{Id}_{\mathfrak{X}(X^f)},$ for some $C\in \mathbb{R}.$

\emph{Case $c_1(X)>0$}: Indeed, $\omega \in 2\pi c_1(X).$ By Lemmas \ref{chi} and \ref{zch}, $-f^* \omega \in 2\pi c_1(X)$ is also a KE form. So Theorem \ref{uniq}.3 implies that $-f^* \omega=a^* \omega,$ for some $a \in \Aut_0(X).$ Since $f$ is involutive, $-(a \circ f)^* \omega =\omega,$ and then the bi-anti-holomorphic map $a \circ f$ is an isometry of $(X,g),$ by Lemma \ref{spg}. Let $X^{a \circ f}$ denote the fixed point set of $a \circ f.$ As in the previous $2$ cases, we conclude, via Theorem \ref{totig} and Lemma \ref{totrk}, that $(X^{a \circ f}, g|_{X^{a \circ f}})$ is totally geodesic and totally real. Then, Lemma \ref{iffKE} and Proposition \ref{mainp} imply that $g|_{X^{a \circ f}}$ is Einstein iff $\operatorname{tr}_{13}(\mathcal{R}^{g,J}_{X^{a \circ f}})=-C\operatorname{Id}_{\mathfrak{X}(X^{a \circ f})},$ for some $C\in \mathbb{R}.$
\end{proof}

In some cases, it should be possible to avoid using the uniqueness of KE metrics (c.f.\ Theorem \ref{uniq}) to conclude that the bi-anti-holomorphism at stake is an isometry. Indeed, let $(X,\omega)$ be a KE manifold, and $f$ be an anti-holomorphism of $X$ with fixed point set an $n$-dimensional submanifold $X^f.$ Let $\{(U_{\alpha}, \phi_{\alpha})\}_{\alpha \in A}$ be a holomorphic atlas such that for each $\alpha \in A,$ $\omega|_{U_{\alpha}}=i\partial \overline{\partial} \chi_{\alpha}$ and $f^* \chi_{\alpha}=\chi_{\alpha}.$ Then, we can apply Lemma \ref{L2} to deduce that $f$ is an isometry of $(U_{\alpha},g|_{U_{\alpha}}),$ hence an isometry of $(X,g).$ Certainly, the totally geodesic totally real submanifold $(X^f, g|_{X^f})$ is Einstein iff condition \ref{Einscondition} holds true. 

It would be interesting to find concrete examples of Einstein manifolds that are submanifolds of a compact KE manifold via Theorem \ref{maint}. However, doing so would require having a closed form expression of the curvature endomorphism of the metric on the ambient manifold.

\begin{remark}\label{FanoEcase}
If $X$ is Fano K{\"a}hler-Einstein, it is known that $\Aut(X)$ is a complex reductive group. More precisely, if $\omega$ is a K{\"a}hler-Einstein metric, and $\Isom(X,\omega)$ is the maximal compact subgroup of $\Aut(X)$ of holomorphic isometries of $\omega,$ then $\Aut(X)$ coincides with the complexification $\Isom(X,\omega)^{\mathbb{C}}$ \cite{Matsushima}. So in fact, the holomorphism $a$ in case $c_1(X)>0$ of Theorem \ref{maint} is an element of $\Isom_0(X,\omega)^{\mathbb{C}}.$ Thus, if it so happens that $a \in \Isom_0(X,\omega),$ the $3^{rd}$ case of Theorem \ref{maint} gives a necessary and sufficient condition for the real part $X^f,$ if non-empty, to be Einstein. 
\end{remark}

\begin{proposition}\label{sanya}
Let $n\geq 1,$ and $(X,g,J)$ be a K{\"a}hler manifold of real dimension $2n.$ Let $f:(X,g,J)\to (X,g,J)$ be an isometric bi-anti-holomorphic mapping with fixed point set $X^f.$ If $X^f$ is an $n$-dimensional submanifold, then it is Lagrangian. 
\end{proposition}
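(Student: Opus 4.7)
The plan is to directly verify the two defining conditions of a Lagrangian submanifold: that $\dim_{\mathbb{R}} X^f = \tfrac{1}{2}\dim_{\mathbb{R}} X$, and that the K\"ahler form $\omega$ vanishes when restricted to $X^f$. The first condition is free from the hypotheses, so the entire task reduces to showing $\omega|_{X^f} \equiv 0$.

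First I would establish the global identity $f^*\omega = -\omega$ on $X$. This is a short computation essentially identical to the one in the proof of Lemma \ref{spg}, only run in reverse: starting from $f^*g = g$ and $f_* J = -J f_*$, one writes
\[
(f^*\omega)(\zeta,\eta) = g(Jf_*\zeta, f_*\eta) = -g(f_* J\zeta, f_*\eta) = -(f^*g)(J\zeta,\eta) = -\omega(\zeta,\eta)
\]
for all $\zeta,\eta \in \mathfrak{X}(X)$. So anti-holomorphicity plus the isometric property forces $f$ to be an anti-isometry of $(X,\omega)$.

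Next I would restrict to the fixed locus and invoke Lemma \ref{totrk}. Since $X^f$ has real dimension $n$, that lemma gives $f_*|_{T_{X^f}} = \operatorname{Id}_{T_{X^f}}$. For any $x \in X^f$ and any $\zeta,\eta \in T_x X^f$, combining this with $f^*\omega = -\omega$ yields
\[
\omega(\zeta,\eta) = \omega(f_*\zeta, f_*\eta) = (f^*\omega)(\zeta,\eta) = -\omega(\zeta,\eta),
\]
so $\omega(\zeta,\eta) = 0$. Hence $\omega|_{X^f} \equiv 0$, and since the dimension condition $\dim_{\mathbb{R}} X^f = n$ is in the hypotheses, $X^f$ is Lagrangian.

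I do not anticipate a serious obstacle here: the argument is a two-line consequence of already-established facts (Lemma \ref{spg}'s computation run in the opposite direction, and Lemma \ref{totrk}). The only subtlety worth flagging is the distinction between Lemma \ref{spg}, where one assumes $f^*\omega = -\omega$ to deduce $f^*g = g$, and the present setting where one has $f^*g = g$ in hand and needs $f^*\omega = -\omega$; both implications follow from the same bilinear identity once anti-holomorphicity is imposed, so no new idea is required.
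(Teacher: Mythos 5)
Your proof is correct and is essentially the paper's argument: both derive $\omega(\zeta,\eta)=-\omega(\zeta,\eta)$ on $X^f$ from the isometry property, anti-holomorphicity, and the identity $f_*|_{T_{X^f}}=\operatorname{Id}_{T_{X^f}}$ of Lemma \ref{totrk}. The only cosmetic difference is that you factor the computation through the global statement $f^*\omega=-\omega$ before restricting to the fixed locus, whereas the paper runs the whole chain of equalities directly for $\zeta,\eta\in\mathfrak{X}(X^f)$.
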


\begin{proof}
Let $\zeta,\eta \in \mathfrak{X}(X^f).$ Then,
\begin{equation*}
\begin{split}
\omega(\zeta,\eta)&=g(J\zeta,\eta)\\
&=(f^* g)(J\zeta,\eta)\\
&=g(f_* J\zeta,f_*\eta)\\
&=-g(Jf_* \zeta,f_* \eta)\\
&=-g(J\zeta,\eta)\\
&=-\omega(\zeta,\eta),
\end{split}
\end{equation*}
so $\omega(\zeta,\eta)=0.$
\end{proof}

In particular, if $f$ is an isometric real structure and the real part $X^f$ is non-empty, then it is Lagrangian. The proof of Theorem \ref{maint}, and the above proposition have the following consequence.

\begin{corollary}\label{Lagcoro}
Let $n\geq 1,$ and $(X,g,J)$ be a compact K{\"a}hler-Einstein manifold of real dimension $2n.$ Let $f$ be a bi-anti-holomorphism of $(X,J),$ and denote its set of fixed points by $X^f.$ Assume that $X^f$ is an $n$-dimensional submanifold. If $c_1(X) < 0,$ then $X^f$ is Lagrangian. If $c_1(X) = 0$ and $[-f^* \omega]=[\omega],$ then $X^f$ is Lagrangian. If $c_1(X)>0$ and $f$ is involutive, there exists a holomorphism $a \in Aut_0(X)$ such that $a\circ f$ is an isometry, and for any such $a,$ if the fixed point set $X^{a\circ f}$ is an $n$-dimensional submanifold, then it is also Lagrangian.
\end{corollary}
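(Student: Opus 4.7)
The plan is to observe that essentially all the work is already done within the proof of Theorem \ref{maint} and Proposition \ref{sanya}: the former verifies, in each of the three curvature regimes, that the pertinent bi-anti-holomorphism is an isometry of $(X,g)$, while the latter upgrades ``isometric bi-anti-holomorphism with fixed-point locus of top totally-real dimension'' directly into the Lagrangian conclusion. Hence the corollary is a packaging statement, and its proof amounts to three parallel invocations of Proposition \ref{sanya}, once the isometry is in place.

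Carrying it out case by case: when $c_1(X)<0$, Lemma \ref{zch} gives that $-f^*\omega$ is K\"ahler-Einstein, uniqueness (Theorem \ref{uniq}.1) forces $-f^*\omega=\omega$, and Lemma \ref{spg} concludes that $f$ is an isometry, so Proposition \ref{sanya} applies to $f$. When $c_1(X)=0$, the hypothesis $[-f^*\omega]=[\omega]$ combined with Lemma \ref{zch} and uniqueness within a K\"ahler class (Theorem \ref{uniq}.2) again yields $-f^*\omega=\omega$, so Lemma \ref{spg} makes $f$ an isometry and Proposition \ref{sanya} closes this case. In the Fano case, Lemmas \ref{chi} and \ref{zch} place $-f^*\omega$ in $2\pi c_1(X)$ as a K\"ahler-Einstein form; Theorem \ref{uniq}.3 supplies $a\in\Aut_0(X)$ with $-f^*\omega=a^*\omega$; involutivity of $f$ rewrites this as $-(a\circ f)^*\omega=\omega$, and Lemma \ref{spg} gives that $a\circ f$ is an isometry. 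Because $a$ is a biholomorphism and $f$ is a bi-anti-holomorphism, the composition $a\circ f$ is still a bi-anti-holomorphism, and Proposition \ref{sanya} applies to $a\circ f$ to produce the Lagrangian conclusion for $X^{a\circ f}$.

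There is no substantive obstacle; the only bookkeeping one needs to keep straight is that $a\circ f$ in the Fano case genuinely inherits the bi-anti-holomorphic character (automatic, since a biholomorphism post-composed with a bi-anti-holomorphism is again one), and that the $n$-dimensionality hypothesis on the fixed locus, which is a standing assumption of the corollary in each case, is precisely what Proposition \ref{sanya} requires. With these trivial verifications, the three cases fold into the single statement.
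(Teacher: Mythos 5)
Your proposal is correct and follows exactly the route the paper intends: the corollary is stated as a consequence of the isometry statements established case by case in the proof of Theorem \ref{maint} together with Proposition \ref{sanya}, which is precisely the packaging you describe. The case-by-case verifications (uniqueness forcing $-f^*\omega=\omega$ in the first two cases, and $-(a\circ f)^*\omega=\omega$ via involutivity in the Fano case, followed by Lemma \ref{spg} and Proposition \ref{sanya}) match the paper's argument.
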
 

\textbf{Acknowledgments.} I thank the referee and the editor for their helpful feedback. I thank Richard Hind, Jean-Pierre Bourguignon, Carlos Simpson, and Maxim Kontsevich for their insightful comments. The research leading to these results has received funding from the European Research Council (ERC) under the European Union's Ninth Framework Programme Horizon Europe (ERC Synergy Project Malinca, Grant Agreement n.\ 101167526).

\begin{minipage}[t]{10cm}
\begin{flushleft}
\textsc{G.\ Clemente}

Institut de recherche en informatique fondamentale

CNRS UMR 8243

Paris, 75013, France

e-mail: gabriella.clemente@cnrs.fr
\end{flushleft}
\end{minipage}
\end{document}